\definecolor{tuwBlue}{RGB}{0,116,178}
\title{Plain convergence of adaptive algorithms\\without exploiting reliability and efficiency}
\author{Gregor Gantner}
\author{Dirk Praetorius}
\address{University of Amsterdam, Faculty of Science, Korteweg--de Vries Instituut, Postbus 94248, 1090 GE Amsterdam, The Netherlands}
\email{g.gantner@uva.nl \quad \rm(corresponding author)}
\address{TU Wien, Institute for Analysis and Scientific Computing, 
Wiedner Hauptstra\ss{}e 8-10/E101/4, 1040 Vienna, Austria}
\email{dirk.praetorius@asc.tuwien.ac.at}
\keywords{}
\subjclass[2010]{}
\thanks{{\bf Acknowledgement.} The authors thankfully acknowledge support by the Austrian Science Fund (FWF) through the SFB \emph{Taming complexity in partial differential systems}, the stand-alone project \emph{Computational nonlinear PDEs} (grant P33216), and the Erwin Schr\"odinger Fellowship \emph{Optimal adaptivity for space-time methods} (grant J4379).}
\def\refine{{\rm refine}}
\def\set#1#2{\big\{#1 \,:\, #2 \big\}}
\def\d#1{\,{\rm d}#1}
\renewcommand{\H}{\mathsf{H}}
\def\N{\mathbb{N}}
\def\MM{\mathcal{M}}
\def\TT{\mathcal{T}}
\def\R{\mathbb{R}}
\def\PP{\mathcal P}
\def\eps{\varepsilon}
\DeclareMathOperator{\diam}{diam}
\DeclareMathOperator{\dist}{dist}
\DeclareMathOperator{\di}{div}
\renewcommand{\div}{\di}
\def\norm#1#2{|\hspace*{-1.5pt}|#1|\hspace*{-1.5pt}|_{#2}}
\def\conststyle{\sf} 
\def\Crel{C_{\conststyle rel}}
\def\Cred{C_{\conststyle red}}
\def\Cstab{C_{\conststyle stab}}
\newcommand*\patchAmsMathEnvironmentForLineno[1]{%
  \expandafter\let\csname old#1\expandafter\endcsname\csname #1\endcsname
  \expandafter\let\csname oldend#1\expandafter\endcsname\csname end#1\endcsname
  \renewenvironment{#1}%
     {\linenomath\csname old#1\endcsname}%
     {\csname oldend#1\endcsname\endlinenomath}}% 
\newcommand*\patchBothAmsMathEnvironmentsForLineno[1]{%
  \patchAmsMathEnvironmentForLineno{#1}%
  \patchAmsMathEnvironmentForLineno{#1*}}%
\def\@seccntformat#1{\vspace*{-2mm}\newline\hspace*{4mm}%
  \protect\textup{\protect\@secnumfont
    \ifnum\pdfstrcmp{subsection}{#1}=0 \bfseries\fi% subsection # in \bfseries
    \csname the#1\endcsname
    \protect\@secnumpunct
  }%
}
\def\paragraph{\@startsection{paragraph}{4}%
  \z@\z@{-\fontdimen2\font}%
  {\normalfont\bfseries}}
\def\section{\@startsection{section}{1}%
\z@{.7\linespacing\@plus\linespacing}{.5\linespacing}%
{\normalsize\scshape\bfseries\centering}}
\renewcommand{\@secnumfont}{\bfseries}
\newcounter{statement}
\newenvironment{statement}[2][!]{%
% \begin{statement}
\vskip2mm
\noindent%
\refstepcounter{statement}%
\bf#2~\thestatement%
\ifthenelse{\equal{#1}{!}}{.\ }{~(#1).\ }%
\it%
}{%
% \end{statement}
\vskip1mm
}
\newenvironment{theorem}[1][!]{\begin{statement}[#1]{Theorem}}{\end{statement}}
\newenvironment{lemma}[1][!]{\begin{statement}[#1]{Lemma}}{\end{statement}}
\newenvironment{proposition}[1][!]{\begin{statement}[#1]{Proposition}}{\end{statement}}
\newenvironment{remark}[1][!]{\begin{statement}[#1]{Remark}}{\end{statement}}
\newenvironment{algorithm}[1][!]{\begin{statement}[#1]{Algorithm}}{\end{statement}}
\def\coarse{H}
\def\fine{h}
\def\XX{\mathcal{X}}
\def\UU{\mathcal{U}}
\def\T{\mathbb{T}}
\def\const#1{C_{\conststyle#1}}
\def\contract#1{q_{\conststyle#1}}
\def\reff#1#2{\stackrel{\eqref{#1}}{#2}}
\begin{document}

%%%%%%%%%%%%%%%%%%%%%%%%%%%%%%%%%%%%%%%%%%%%%%%%%%%%%%%%%%%%%%%%%%%%%%%%%%%%%%%%%%%
%%%%%%%%%%%%%%%%%%%%%%%%%%%%%%%%%%%%%%%%%%%%%%%%%%%%%%%%%%%%%%%%%%%%%%%%%%%%%%%%%%%
\begin{abstract}
We consider $h$-adaptive algorithms in the context of the finite element method (FEM) and the boundary element method (BEM). Under quite general assumptions on the building blocks SOLVE, ESTIMATE, MARK, and REFINE of such algorithms, we prove plain convergence in the sense that the adaptive algorithm drives the underlying {\sl a~posteriori} error estimator to zero. Unlike available results in the literature, our analysis avoids the use of any reliability and efficiency estimate, but only relies on structural properties of the estimator, namely stability on non-refined elements and reduction on refined elements. In particular, the new framework thus covers also problems involving non-local operators like the fractional Laplacian or boundary integral equations, where (discrete) efficiency is (currently) not available.
\end{abstract}
%%%%%%%%%%%%%%%%%%%%%%%%%%%%%%%%%%%%%%%%%%%%%%%%%%%%%%%%%%%%%%%%%%%%%%%%%%%%%%%%%%%
%%%%%%%%%%%%%%%%%%%%%%%%%%%%%%%%%%%%%%%%%%%%%%%%%%%%%%%%%%%%%%%%%%%%%%%%%%%%%%%%%%%

\maketitle
\thispagestyle{fancy}

%%%%%%%%%%%%%%%%%%%%%%%%%%%%%%%%%%%%%%%%%%%%%%%%%%%%%%%%%%%%%%%%%%%%%%%%%%%%%%%%%%%
%%%%%%%%%%%%%%%%%%%%%%%%%%%%%%%%%%%%%%%%%%%%%%%%%%%%%%%%%%%%%%%%%%%%%%%%%%%%%%%%%%%

\def\SS{\mathcal{S}}
\def\XXX{{\color{red}XXX}}
%%%%%%%%%%%%%%%%%%%%%%%%%%%%%%%%%%%%%%%%%%%%%%%%%%%%%%%%%%%%%%%%%%%%%%%%%%%%%%%%%%%
%%%%%%%%%%%%%%%%%%%%%%%%%%%%%%%%%%%%%%%%%%%%%%%%%%%%%%%%%%%%%%%%%%%%%%%%%%%%%%%%%%%
\section{Introduction}
%%%%%%%%%%%%%%%%%%%%%%%%%%%%%%%%%%%%%%%%%%%%%%%%%%%%%%%%%%%%%%%%%%%%%%%%%%%%%%%%%%%
%%%%%%%%%%%%%%%%%%%%%%%%%%%%%%%%%%%%%%%%%%%%%%%%%%%%%%%%%%%%%%%%%%%%%%%%%%%%%%%%%%%

\noindent
{\sl A~posteriori} error estimation and related adaptive mesh-refinement via the loop
\begin{align}\label{eq:semr}
\bf SOLVE 
\quad \longrightarrow \quad
ESTIMATE
\quad \longrightarrow \quad
MARK
\quad \longrightarrow \quad
REFINE
\end{align}
are standard tools in modern scientific computing. Over the last decade, the mathematical understanding has matured. 
Convergence with optimal algebraic rates is mathematically guaranteed for a reasonable class of elliptic model problems and standard discretizations; we refer to the  works~\cite{doerfler1996,mns2000,bdd2004,stevenson2007,ckns2008,cn2012,ffp2014} for some important steps as well as to the state-of-the-art review~\cite{axioms}. However, all these works employ the so-called D\"orfler marking strategy proposed in~\cite{doerfler1996} to single out elements for refinement. Moreover, for the 2D Poisson problem, it has recently been shown that a modified maximum criterion does not only lead to optimal convergence rates, but even leads to instance optimal meshes~\cite{dks2016,ks2016,ip2020+}. As the focus comes to other marking strategies, only plain convergence results are known and the essential works are~\cite{msv2008,siebert2011}. 

To outline the results of~\cite{msv2008,siebert2011} and the contributions of the present work, let us fix some notation.
Let $\XX$ be a normed space which is linked to some domain (or manifold) $\Omega \subset \R^d$, $d \ge 1$. Let $u \in \XX$  be the sought (unknown) solution. Suppose that the discrete subspaces $\XX_\ell \subset \XX$ are linked to some mesh $\TT_\ell$ of $\Omega$ consisting of compact subdomains of $\Omega$. Let $u_\ell \in \XX_\ell$ be a \emph{computable} discrete approximation of $u$. Finally, let $\eta_\ell^2 = \sum_{T \in \TT_\ell} \eta_\ell(T)^2$ be a \emph{computable} error estimator such that $\eta_\ell(T)$ measures, at least heuristically, the error $u - u_\ell$ on $T \in \TT_\ell$. We suppose that the sequence of meshes $(\TT_\ell)_{\ell \in \N_0}$ is generated by the adaptive loop~\eqref{eq:semr}. In such a setting, it has already been observed in the seminal work~\cite{bv1984} that nestedness $\XX_\ell \subseteq \XX_{\ell+1}$ of the discrete spaces together with a C\'ea-type quasi-optimality proves the so-called {\sl a~priori} convergence of adaptive schemes, i.e., there always exists a limit $u_\infty \in \XX$ such that
\begin{align}\label{assumption:limit_u}
 \norm{u_\infty - u_\ell}{\XX} \to 0
 \quad \text{as } \ell \to \infty.
\end{align}
However, it remains to prove that also $u = u_\infty$.

To explain the abstract notation, let us consider the 2D Poisson model problem: In this case,
 $\Omega \subset \R^2$ is a polygonal Lipschitz domain, $f \in L^2(\Omega)$ is some given load, $u \in \XX = H^1_0(\Omega)$ solves the 2D Poisson model problem $-\Delta u = f$ in $\Omega$ subject to homogeneous boundary conditions $u = 0$ on $\partial\Omega$, the meshes $\TT_\ell$ are conforming triangulations of $\Omega$ into compact triangles $T \in \TT_\ell$, and $u_\ell \in \XX_\ell = \set{v_\ell \in H^1_0(\Omega)}{v_\ell|_T \text{ is affine for all } T \in \TT_\ell}$ is the conforming first-order finite element approximation of $u$, which solves
\begin{align}\label{eq:poisson:fem}
 \int_\Omega \nabla u_\ell \cdot \nabla v_\ell \d{x} = \int_\Omega f v_\ell \d{x}
 \quad \text{for all } v_\ell \in \XX_\ell
\end{align}
For this problem, the classical residual error estimator reads
\begin{align}\label{eq:poisson:estimator}
 \eta_\ell^2 = \sum_{T \in \TT_\ell} \eta_\ell(T)^2 
 \quad \text{with} \quad
 \eta_\ell(T)^2 = h_T^2 \, \norm{f}{L^2(T)}^2 + h_T \, \norm{\llbracket\partial_n u_\ell\rrbracket}{L^2(\partial T \cap \Omega)}^2,
\end{align}
where $\llbracket\cdot\rrbracket$ denotes the jump across interior edges and $h_T = |T|^{1/2}$ denotes the local mesh-size; see, e.g., the monographs~\cite{ao00,verfuerth}.

While~\cite{msv2008} formally focusses on conforming Petrov--Galerkin discretizations in the setting of Ladyshenskaja--Babuska--Brezzi (LBB), the actual analysis is more general: Besides some assumptions on the locality of the norms of the involved function spaces, there are no assumptions on how $u$ or $u_\ell$ are computed. The crucial assumptions in~\cite{msv2008} are \emph{local efficiency}
\begin{align}\label{eq:local_efficiency}
 C_{\rm eff}^{-1} \, \eta_\ell(T) 
 \le \norm{u - u_\ell}{\XX(\Omega_\ell(T))} + {\rm osc}_\ell(\Omega_\ell(T))
 \quad \text{for all } T \in \TT_\ell 
\end{align}
as well as \emph{discrete local efficiency} on marked elements 
\begin{align}\label{eq:discrete_local_efficiency}
 C_{\rm eff}^{-1} \, \eta_\ell(T) 
 \le \norm{u_{\ell+1} - u_\ell}{\XX(\Omega_\ell(T))} + {\rm osc}_\ell(\Omega_\ell(T))
 \quad \text{for all } T \in \MM_\ell,
\end{align}
where $\Omega_\ell(T) = \bigcup\set{T' \in \TT_\ell}{T' \cap T \neq \emptyset}$ is the patch of $T$ and ${\rm osc}_\ell$ are some data oscillation terms. It is known that the latter assumption requires (at least) stronger local refinement, e.g., the local bisec5 refinement of marked elements in 2D to ensure the interior node property; see, e.g.,~\cite{mns2000}. The main result of~\cite{msv2008} proves that under these assumptions and for quite general marking strategies (see~\eqref{eq:marking} below), the adaptive algorithm ensures that {\sl a~priori} convergence~\eqref{assumption:limit_u} already implies \emph{estimator convergence}
\begin{align}\label{eq:estimator_convergence}
 \eta_\ell \to 0 
 \quad \text{as } \ell \to \infty.
\end{align}
Provided that the error estimator $\eta_\ell$ additionally satisfies reliability, i.e.,
\begin{align}\label{eq:reliability}
 \norm{u - u_\ell}{\XX} \le \const{rel} \, \eta_\ell,
\end{align}
this proves that $\norm{u - u_\ell}{\XX} \to 0$ as $\ell \to \infty$. In explicit terms, the main result of~\cite{msv2008} reads as follows: If the discrete solutions $u_\ell \in \XX_\ell$ converge~\eqref{assumption:limit_u}, then they converge indeed to the correct limit $u = u_\infty$ --- provided that the error estimator satisfies~\eqref{eq:local_efficiency}--\eqref{eq:discrete_local_efficiency} and~\eqref{eq:reliability}.

Conceptually, it is remarkable that the convergence proof of~\cite{msv2008} exploits lower error bounds, although the mesh-refinement is driven by the error estimator only. The work~\cite{siebert2011} thus aimed to prove convergence without using (discrete) lower bounds. This, however, comes at the cost that, first, the analysis exploits the problem setting (and is restricted to Petrov--Galerkin discretizations of operator equations $Bu = F$) and, second, the ana\-ly\-sis relies on some strengthened reliability estimate (formulated in terms of the residual), which implies~\eqref{eq:reliability}. The main result of~\cite{siebert2011} then states that under these assumptions and for quite general marking strategies (see~\eqref{eq:marking2} below), the adaptive algorithm ensures that {\sl a~priori} convergence~\eqref{assumption:limit_u} already implies \emph{error convergence}
\begin{align}\label{eq:error_convergence}
 \norm{u - u_\ell}{\XX} \to 0
 \quad \text{as } \ell \to \infty.
\end{align}
In particular, the new proof of~\cite{siebert2011} avoids the discrete local efficiency~\eqref{eq:discrete_local_efficiency}.
Surprisingly, however, estimator convergence~\eqref{eq:estimator_convergence} cannot be proved under the assumptions of~\cite{siebert2011} but requires that the error estimator $\eta_\ell$ is also locally efficient~\eqref{eq:local_efficiency}.

One advantage of the results of~\cite{msv2008,siebert2011} is that they apply to  many different {\sl a~posteriori} error estimators. In particular, it has recently been shown in~\cite{fp2020+,gs2020+} that the assumptions of~\cite{siebert2011} are, in particular, satisfied for a wide range of model problems discretized by least squares finite element methods, where adaptivity is driven by the built-in least-squares functional, including even a least-squares space-time discretization of the heat equation. On the other hand,~\cite{siebert2011} excludes adaptive schemes for variational inequalities, and both works~\cite{msv2008,siebert2011} need local efficiency of the error estimator, which does not appear to be available for non-local operators, e.g., finite element methods for the fractional Laplacian (see, e.g.,~\cite{fmp2019+}) or boundary element methods for elliptic integral equations (see, e.g.,~\cite{fkmp2013,gantumur2013}). 

With the latter observations, the current paper comes into play. We provide a new proof for plain convergence of adaptive algorithms, which does \emph{neither} involve reliability~\eqref{eq:reliability} \emph{nor} any kind of (global or local) efficiency~\eqref{eq:local_efficiency}--\eqref{eq:discrete_local_efficiency}. Instead, we exploit that the local contributions of many residual error estimators are weighted by the local mesh-size (cf.~\eqref{eq:poisson:estimator} for the Poisson model problem). With scaling arguments, one usually obtains \emph{reduction on refined elements}
\begin{align}\label{eq:intro:reduction}
 \eta_{\ell+n}(\TT_{\ell+n} \backslash \TT_\ell)^2 
 \le q \, \eta_{\ell}(\TT_\ell \backslash \TT_{\ell+n})^2 
 + C \, \norm{u_{\ell+n} - u_\ell}{\XX}^2
 \quad \text{for all } \ell, n \in \N_0,
\end{align}
with generic constants $0 < q < 1$ and $C > 0$. Note that $\TT_\ell \backslash \TT_{\ell+n}$ corresponds to the elements that are going to be refined, while $\TT_{\ell+n} \backslash \TT_\ell$ corresponds to the generated children. Moreover, there usually holds \emph{stability on non-refined elements}
\begin{align}\label{eq:intro:stability}
 | \eta_{\ell+n}(\TT_{\ell+n} \cap \TT_\ell) - \eta_\ell(\TT_{\ell+n} \cap \TT_\ell) |
 \le C \, \norm{u_{\ell+n} - u_\ell}{\XX}
 \quad \text{for all } \ell,n \in \N_0.
\end{align}
We stress that~\eqref{eq:intro:reduction}--\eqref{eq:intro:stability} play also a fundamental role in the contemporary proofs of optimal convergence rates for adaptive algorithms; see~\cite{axioms}. The main result of the present work (Theorem~\ref{theoerem:estconv:new}) shows that, together with the same marking criterion as in~\cite{msv2008}, the structural properties~\eqref{eq:intro:reduction}--\eqref{eq:intro:stability} suffice to show that {\sl a~priori} convergence~\eqref{assumption:limit_u} yields estimator convergence~\eqref{eq:estimator_convergence}. Clearly, reliability~\eqref{eq:reliability} is then finally required to conclude error convergence~\eqref{eq:error_convergence}. 

\bigskip
\textbf{Outline.} The remainder of this work is organized as follows: In Section~\ref{section:abstract}, we provide a formal statement of the adaptive algorithm (Algorithm~\ref{alg:abstract_algorithm}) as well as precise assumptions on its four modules from~\eqref{eq:semr}. The new plain convergence result (Theorem~\ref{theoerem:estconv:new}, Theorem~\ref{theoerem2:estconv:new}) is stated and proved in Section~\ref{section:main}, before we give some examples which do not fit the framework of~\cite{msv2008,siebert2011}, but are covered by the current analysis.

%%%%%%%%%%%%%%%%%%%%%%%%%%%%%%%%%%%%%%%%%%%%%%%%%%%%%%%%%%%%%%%%%%%%%%%%%%%%%%%%%%%
%%%%%%%%%%%%%%%%%%%%%%%%%%%%%%%%%%%%%%%%%%%%%%%%%%%%%%%%%%%%%%%%%%%%%%%%%%%%%%%%%%%
\section{Abstract adaptive algorithm}
\label{section:abstract}
%%%%%%%%%%%%%%%%%%%%%%%%%%%%%%%%%%%%%%%%%%%%%%%%%%%%%%%%%%%%%%%%%%%%%%%%%%%%%%%%%%%
%%%%%%%%%%%%%%%%%%%%%%%%%%%%%%%%%%%%%%%%%%%%%%%%%%%%%%%%%%%%%%%%%%%%%%%%%%%%%%%%%%%

%%%%%%%%%%%%%%%%%%%%%%%%%%%%%%%%%%%%%%%%%%%%%%%%%%%%%%%%%%%%%%%%%%%%%%%%%%%%%%%%%%%
\subsection{Mesh refinement}\label{section:mesh-refinement}
%%%%%%%%%%%%%%%%%%%%%%%%%%%%%%%%%%%%%%%%%%%%%%%%%%%%%%%%%%%%%%%%%%%%%%%%%%%%%%%%%%%

Let $\Omega \subset \R^d$ be a bounded domain (or a manifold in $\R^d$) with positive measure $|\Omega| > 0$. We say that $\TT_\coarse$ is a mesh (of $\Omega)$, if 
\begin{itemize}
\item $\TT_\coarse$ is a finite set of compact sets $T \in \TT_\coarse$ with positive measure $|T| > 0$;
\item for all $T, T' \in \TT_\coarse$ with $T \neq T'$, it holds that $|T \cap T'| = 0$;
\item $\TT_\coarse$ is a covering of $\overline\Omega$, i.e., $\overline\Omega = \bigcup_{T \in \TT_\coarse} T$.
\end{itemize}
Let $\refine(\cdot)$ be a fixed refinement strategy, i.e., for each mesh $\TT_\coarse$ and a set of marked elements $\MM_\coarse \subseteq \TT_\coarse$, the refinement strategy returns a refined mesh $\TT_\fine := \refine(\TT_\coarse,\MM_\coarse)$ such that, first, at least the marked elements are refined (i.e., $\MM_\coarse \subseteq \TT_\coarse \backslash \TT_\fine$) and, second,
parents $T \in \TT_\coarse$ are the union of their children, i.e.,
\begin{align}\label{eq:union_of_sons}
 T = \bigcup \set{T' \in \TT_\fine}{T' \subseteq T} 
 \quad \text{for all } T \in \TT_\coarse.
\end{align}
For a mesh $\TT_\coarse$, let $\T(\TT_\coarse)$ denote the set of all possible refinements of $\TT_\coarse$ (as determined by the refinement strategy $\refine(\cdot)$), i.e., for any $\TT_\fine \in \T(\TT_\coarse)$, there exists $n \in \N_0$ and $\TT'_0, \dots, \TT'_n$ such that $\TT'_0 = \TT_\coarse$, $\TT'_{j+1} = \refine(\TT'_j,\MM'_j)$ for all $j = 0, \dots, n-1$ and appropriate $\MM'_j \subseteq \TT'_j$, and $\TT'_n = \TT_\fine$.
Finally, we suppose that we are given a fixed initial mesh $\TT_0$ so that it makes sense to call $\T := \T(\TT_0)$ the \emph{set of all admissible meshes.}

%%%%%%%%%%%%%%%%%%%%%%%%%%%%%%%%%%%%%%%%%%%%%%%%%%%%%%%%%%%%%%%%%%%%%%%%%%%%%%%%%%%
\subsection{Continuous and discrete setting}
%%%%%%%%%%%%%%%%%%%%%%%%%%%%%%%%%%%%%%%%%%%%%%%%%%%%%%%%%%%%%%%%%%%%%%%%%%%%%%%%%%%

Let $\XX$ be a normed space (related to $\Omega$) and $u \in \XX$ be the (unknown) exact solution. For each mesh $\TT_\coarse$, let $\XX_\coarse \subseteq \XX$ be an associated discrete subspace and $u_\coarse \in \XX_\coarse$ be the corresponding (computable) discrete solution.

%%%%%%%%%%%%%%%%%%%%%%%%%%%%%%%%%%%%%%%%%%%%%%%%%%%%%%%%%%%%%%%%%%%%%%%%%%%%%%%%%%%
\subsection{Error estimator}
%%%%%%%%%%%%%%%%%%%%%%%%%%%%%%%%%%%%%%%%%%%%%%%%%%%%%%%%%%%%%%%%%%%%%%%%%%%%%%%%%%%

For each mesh $\TT_\coarse$ and all $T \in \TT_\coarse$, let $\eta_\coarse(T) \ge 0$ be a \emph{computable} quantity which is usually called \emph{refinement indicator}. At least heuristically, $\eta_\coarse(T)$ measures the error $u - u_\coarse$ on the element $T$. We abbreviate 
\begin{align}
 \eta_\coarse := \eta_\coarse(\TT_\coarse),
 \quad \text{where} \quad
 \eta_\coarse(\UU_\coarse) := \Big( \sum_{T \in \TT_\coarse} \eta_\coarse(T)^2 \Big)^{1/2}
 \quad \text{for all } \UU_\coarse \subseteq \TT_\coarse.
\end{align}
%and abbreviate $$. 
We note that $\eta_\coarse$ is usually referred to as \emph{error estimator}. 

%%%%%%%%%%%%%%%%%%%%%%%%%%%%%%%%%%%%%%%%%%%%%%%%%%%%%%%%%%%%%%%%%%%%%%%%%%%%%%%%%%%
\subsection{Adaptive algorithm}
%%%%%%%%%%%%%%%%%%%%%%%%%%%%%%%%%%%%%%%%%%%%%%%%%%%%%%%%%%%%%%%%%%%%%%%%%%%%%%%%%%%

Starting from the given initial mesh $\TT_0$, we consider the standard adaptive loop~\eqref{eq:semr} in the following algorithmic form:

\begin{algorithm}
\label{alg:abstract_algorithm}
For each $\ell=0,1,2,\dots$, iterate the following steps~{\rm(i)--(iv)}:
\begin{itemize}%[\rm(i)]
\item[\rm(i)] {\bf SOLVE:} Compute the discrete solution $u_\ell \in \XX_\ell$. 
\item[\rm(ii)] {\bf ESTIMATE:} Compute  refinement indicators $\eta_\ell(T)$ for all elements ${T}\in\TT_\ell$. 
\item[\rm(iii)] {\bf MARK:} Determine a set of marked elements $\MM_\ell\subseteq\TT_\ell$.
\item[\rm(iv)] {\bf REFINE:} Generate the refined mesh $\TT_{\ell+1}:=\refine(\TT_\ell,\MM_\ell)$. 
\end{itemize}
\textbf{Output:} Refined meshes $\TT_\ell$, corresponding exact discrete solutions $u_\ell$, and 
error estimators $\eta_\ell$ for all $\ell \in \N_0$.\qed
\end{algorithm}

To analyze Algorithm~\ref{alg:abstract_algorithm}, it remains to specify further assumptions on its four modules: As far as {\bf SOLVE} is concerned, we shall only assume {\sl a~priori} convergence~\eqref{assumption:limit_u}.
While this assumption is guaranteed for many problems (see, e.g.,~\cite{msv2008,siebert2011} for problems in the framework of the LBB theory as well as the seminal work~\cite{bv1984} for problems in the Lax--Milgram setting), we stress that, at this point, it is still mathematically unclear whether there holds $u_\infty = u$ or not. 

As far as {\bf MARK} is concerned, let $M \colon \R_{\ge0} \to \R_{\ge0}$ be continuous at $0$ with $M(0)=0$ and suppose that the sets $\MM_\ell \subseteq \TT_\ell$ satisfy the following property from~\cite{msv2008}:
\begin{align}\label{eq:marking}
 \max_{T\in\TT_\ell\setminus\MM_\ell} \eta_\ell(T) \le M\big(\eta_\ell(\MM_\ell)\big).
\end{align}
We note that the latter assumption is weaker than the following assumption from~\cite{siebert2011}:
\begin{align}\label{eq:marking2}
 \max_{T\in\TT_\ell\setminus\MM_\ell} \eta_\ell(T) \le M\big(\max_{T \in \MM_\ell}\eta_\ell(T)\big).
\end{align}
Clearly, the marking criteria~\eqref{eq:marking}--\eqref{eq:marking2} are satisfied with $M(t) = t$ as soon as $\MM_\ell$ contains one element with maximal indicator, i.e., there exists $T \in \MM_\ell$ such that $\eta_\ell(T) = \max_{T' \in \TT_\ell} \eta_\ell(T')$. For instance, this is the case for
\begin{itemize}
\item the \emph{maximum criterion} for some fixed $0 \le \theta \le 1$, where 
\begin{align}\label{eq:marking:maximum}
 \MM_\ell:=\set{T\in\TT_\ell}{\eta_\ell(T) \ge (1-\theta) \max_{T'\in\TT_\ell} \eta_\ell(T')}
\end{align}
\item the \emph{equidistribution criterion} for fixed $0 \le \theta \le 1$, where
\begin{align}\label{eq:marking:equilibration}
 \MM_\ell := \set{T\in\TT_\ell}{\eta_\ell(T)\ge (1-\theta)\, \eta_\ell/\#\TT_\ell}
\end{align}
\end{itemize}
Finally, let us consider the \emph{D\"orfler criterion} for some fixed $0<\theta\le1$ , i.e., 
\begin{align}\label{eq:marking:doerfler}
 \theta\,\eta_\ell^2 \le \eta_\ell(\MM_\ell)^2.
\end{align}
While~\eqref{eq:marking2} cannot be satisfied in general,~\eqref{eq:marking} holds with $M(t):=\sqrt{(1-\theta)\,\theta^{-1}t}$.
To see this, let $T\in\TT_\ell\setminus\MM_\ell$ and note that
\begin{align*}
 \eta_\ell(T)^2 
 &\le \eta_\ell(\TT_\ell\setminus\MM_\ell)^2 
 %\\&
 = \eta_\ell^2 - \eta_\ell(\MM_\ell)^2 \le (1-\theta)\eta_\ell^2 \le (1-\theta)\,\theta^{-1} \eta_\ell(\MM_\ell)^2.
\end{align*}
However, if the set $\MM_\ell$ is constructed via sorting of the indicators, then
\begin{align}\label{eq:marking:doerfler:max}
 \max_{T'\in\TT_\ell\setminus\MM_\ell} \eta_\ell(T') 
 \le \min_{T'\in\MM_\ell} \eta_\ell(T');
\end{align}
see \cite{pp2020} for different algorithms which generate $\MM_\ell \subseteq \TT_\ell$ satisfying the D\"orfler criterion~\eqref{eq:marking:doerfler} together with~\eqref{eq:marking:doerfler:max}. In the latter case,~\eqref{eq:marking}--\eqref{eq:marking2} hold again with $M(t):=t$.
%
%%%%%%%%%%%%%%%%%%%%%%%%%%%%%%%%%%%%%%%%%%%%%%%%%%%%%%%%%%%%%%%%%%%%%%%%%%%%%%%%%%%
%\subsection{Estimator convergence based on local discrete efficiency}
%%%%%%%%%%%%%%%%%%%%%%%%%%%%%%%%%%%%%%%%%%%%%%%%%%%%%%%%%%%%%%%%%%%%%%%%%%%%%%%%%%%
%%%%%%%%%%%%%%%%%%%%%%%%%%%%%%%%%%%%%%%%%%%%%%%%%%%%%%%%%%%%%%%%%%%%%%%%%%%%%%%%%%%
%\subsection{Error convergence based on strong reliability}
%%%%%%%%%%%%%%%%%%%%%%%%%%%%%%%%%%%%%%%%%%%%%%%%%%%%%%%%%%%%%%%%%%%%%%%%%%%%%%%%%%%

%%%%%%%%%%%%%%%%%%%%%%%%%%%%%%%%%%%%%%%%%%%%%%%%%%%%%%%%%%%%%%%%%%%%%%%%%%%%%%%%%%%
%%%%%%%%%%%%%%%%%%%%%%%%%%%%%%%%%%%%%%%%%%%%%%%%%%%%%%%%%%%%%%%%%%%%%%%%%%%%%%%%%%%
\section{A new plain convergence result}
\label{section:main}
%%%%%%%%%%%%%%%%%%%%%%%%%%%%%%%%%%%%%%%%%%%%%%%%%%%%%%%%%%%%%%%%%%%%%%%%%%%%%%%%%%%
%%%%%%%%%%%%%%%%%%%%%%%%%%%%%%%%%%%%%%%%%%%%%%%%%%%%%%%%%%%%%%%%%%%%%%%%%%%%%%%%%%%

Unlike~\cite{msv2008,siebert2011}, we only require the following two structural properties of the error estimator for all $\TT_\coarse \in \T$ and all refinements $\TT_\fine \in \T(\TT_\coarse)$, where $S, R \colon \R_{\ge0} \to \R_{\ge0}$ are functions which are continuous at $0$ with $R(0) = 0 = S(0)$ and $0 < \contract{red} < 1$:
\begin{itemize}
\item {\bf stability on non-refined elements}, i.e., 
\begin{align}\label{eq:axiom:stability}
 \eta_\fine(\TT_\fine \cap \TT_\coarse) 
 \le \eta_\coarse(\TT_\fine \cap \TT_\coarse)
 + S(\norm{u_\fine - u_\coarse}{\XX});
\end{align}
\item {\bf reduction on refined elements}, i.e., 
\begin{align}\label{eq:axiom:reduction}
 \eta_\fine(\TT_\fine \backslash \TT_\coarse)^2
 \le 
 \contract{red} \, \eta_\coarse(\TT_\coarse \backslash \TT_\fine)^2
 + R(\norm{u_\fine - u_\coarse}{\XX}).
\end{align}
\end{itemize}
We note that, \eqref{eq:axiom:stability}--\eqref{eq:axiom:reduction} are implicitly first found in the proof of~\cite[Corollary~3.4]{ckns2008}, but already seem to go back to~\cite{dk2008} (used there for the oscillations).
In practice, the reduction~\eqref{eq:axiom:reduction} can only be proved if the local contributions $\eta_\coarse(T)$ of the error estimator are weighted by (some positive power of) the local mesh-size $h_T$. In the later examples in Section~\ref{section:examples}, it holds that $S(t) \sim t$ and $R(t) \sim t^2$.

Under the structural assumptions~\eqref{eq:axiom:stability}--\eqref{eq:axiom:reduction} on the estimator, the following theorem already proves that Algorithm~\ref{alg:abstract_algorithm} leads to estimator convergence. We stress that neither the reliability estimate~\eqref{eq:reliability} nor any (global or even local) efficiency estimate (e.g.,~\eqref{eq:local_efficiency}--\eqref{eq:discrete_local_efficiency}) is required.

\begin{theorem}\label{theoerem:estconv:new}
Suppose the properties~\eqref{eq:axiom:stability}--\eqref{eq:axiom:reduction} of the estimator and that refinement ensures that each parent is the union of its children~\eqref{eq:union_of_sons}. Consider the output of Algorithm~\ref{alg:abstract_algorithm}
with the marking strategy~\eqref{eq:marking}.
%and suppose~\eqref{eq:lemma70} instead of \eqref{eq:marking} for the marking strategy 
%as well as 
Then, {\sl a~priori} convergence~\eqref{assumption:limit_u} implies estimator convergence
\begin{align}\label{eq:theoerem:estconv:new}
 \eta_\ell \to 0 \quad \text{as } \ell \to \infty.
\end{align}
\end{theorem}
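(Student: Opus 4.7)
The plan is to first derive a master inequality relating estimators at different levels, then classify the elements of each $\TT_\ell$ into those that are eventually refined and those that persist forever, and finally combine the marking criterion with reduction to control both contributions to $\eta_\ell^2$.

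I would start by combining stability~\eqref{eq:axiom:stability} and reduction~\eqref{eq:axiom:reduction} via $(a+b)^2 = a^2 + 2ab + b^2$ to obtain, for all $L \ge \ell \ge 0$,
\begin{align*}
\eta_L^2 \le \eta_\ell^2 - (1 - \contract{red})\, \eta_\ell(\TT_\ell \setminus \TT_L)^2 + \Phi(\norm{u_L - u_\ell}{\XX}),
\end{align*}
where $\Phi \colon \R_{\ge 0} \to \R_{\ge 0}$ absorbs the cross term $2\, \eta_\ell(\TT_L \cap \TT_\ell)\, S(\norm{u_L - u_\ell}{\XX})$ once a uniform bound $\sup_\ell \eta_\ell \le C_0$ has been established from a cruder version of the same inequality with $\ell = 0$ and the boundedness of $(u_\ell)$. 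Since $\TT_\ell$ is finite, I set $\TT_\ell^+ := \set{T \in \TT_\ell}{T \in \TT_k \text{ for all } k \ge \ell}$ (elements of $\TT_\ell$ never refined) and $\TT_\ell^- := \TT_\ell \setminus \TT_\ell^+$ (elements eventually refined); there exists $L^\ast(\ell) \ge \ell$ such that $\TT_L \cap \TT_\ell = \TT_\ell^+$ and $\TT_\ell \setminus \TT_L = \TT_\ell^-$ for all $L \ge L^\ast(\ell)$.

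From the master inequality together with the {\sl a~priori} convergence~\eqref{assumption:limit_u}, which renders $\Phi(\norm{u_L - u_\ell}{\XX})$ uniformly small for $L, \ell$ large, a double-limit argument (drop the negative term, then take $\limsup_L$ and $\liminf_\ell$) shows that $\eta_\ell^2$ converges to some $a \ge 0$. Rearranging the master inequality with $L \ge L^\ast(\ell)$ and using Cauchyness of $(\eta_\ell^2)$ then yields $\eta_\ell(\TT_\ell^-) \to 0$. Since $\MM_\ell \subseteq \TT_\ell \setminus \TT_{\ell+1} \subseteq \TT_\ell^-$, this gives $\eta_\ell(\MM_\ell) \to 0$, and the marking criterion~\eqref{eq:marking} together with $\TT_\ell^+ \subseteq \TT_\ell \setminus \MM_\ell$ and continuity of $M$ at $0$ implies
\begin{align*}
m_\ell := \max_{T \in \TT_\ell^+} \eta_\ell(T) \le M(\eta_\ell(\MM_\ell)) \to 0.
\end{align*}

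The main obstacle is that $m_\ell \to 0$ does \emph{not} directly imply $\eta_\ell(\TT_\ell^+) \to 0$, because $\#\TT_\ell^+$ is in general unbounded. I would handle this with the two-scale split
\begin{align*}
\eta_\ell(\TT_\ell^+)^2 = \eta_\ell(\TT_{\ell_0}^+)^2 + \eta_\ell(\TT_\ell^+ \setminus \TT_{\ell_0}^+)^2,
\end{align*}
valid for $\ell \ge \ell_0$ since $\TT_{\ell_0}^+ \subseteq \TT_\ell^+$. The first summand is bounded by $\#\TT_{\ell_0}^+ \cdot m_\ell^2$, which tends to $0$ as $\ell \to \infty$ for any fixed $\ell_0$. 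For the second, the key combinatorial observation is that any $T \in \TT_\ell^+ \setminus \TT_{\ell_0}^+$ must satisfy $T \notin \TT_{\ell_0}$ (otherwise $T \in \TT_{\ell_0} \cap \TT_\ell^+$ would force $T \in \TT_k$ for all $k \ge \ell_0$, contradicting $T \notin \TT_{\ell_0}^+$), so $\TT_\ell^+ \setminus \TT_{\ell_0}^+ \subseteq \TT_\ell \setminus \TT_{\ell_0}$; invoking reduction~\eqref{eq:axiom:reduction} on the pair $(\TT_{\ell_0}, \TT_\ell)$ and using $\TT_{\ell_0} \setminus \TT_\ell \subseteq \TT_{\ell_0}^-$ gives
\begin{align*}
\eta_\ell(\TT_\ell^+ \setminus \TT_{\ell_0}^+)^2 \le \eta_\ell(\TT_\ell \setminus \TT_{\ell_0})^2 \le \contract{red}\, \eta_{\ell_0}(\TT_{\ell_0}^-)^2 + R(\norm{u_\ell - u_{\ell_0}}{\XX}),
\end{align*}
which is made arbitrarily small by first fixing $\ell_0$ large (so $\eta_{\ell_0}(\TT_{\ell_0}^-)$ is small, by the previous paragraph) and then taking $\ell \to \infty$ (killing the $R$-term). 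Combining with $\eta_\ell(\TT_\ell^-) \to 0$ yields $\eta_\ell \to 0$, which is~\eqref{eq:theoerem:estconv:new}.
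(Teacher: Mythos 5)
Your proof is correct and takes a genuinely different route from the paper's. The paper extracts a subsequence $(\TT_{\ell_k})_k$ along which \emph{all} non-persistent elements of $\TT_{\ell_k}$ are refined before level $\ell_{k+1}$, applies a contraction-type lemma three times (to $\eta_{\ell_k}(\TT_{\ell_k}\setminus\TT_{\ell_{k+1}})$, to the squared estimator along a further subsequence, and to lift to the full sequence), and along the way proves pointwise estimator convergence on persistent elements (its Step~3). You instead work with the full sequence throughout: a monotonicity/double-limit argument shows that $(\eta_\ell^2)_\ell$ is convergent, a rearrangement of the master inequality at $L\ge L^\ast(\ell)$ shows $\eta_\ell(\TT_\ell^-)\to 0$, the marking criterion then gives $m_\ell=\max_{T\in\TT_\ell^+}\eta_\ell(T)\to0$, and the two-scale split $\TT_\ell^+ = \TT_{\ell_0}^+ \cup (\TT_\ell^+\setminus\TT_{\ell_0}^+)$ together with the inclusion $\TT_\ell^+\setminus\TT_{\ell_0}^+ \subseteq \TT_\ell\setminus\TT_{\ell_0}$ lets you control the unboundedly many persistent elements created after level $\ell_0$ by reduction. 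This avoids the paper's nested subsequence extraction and gives a somewhat more transparent picture of \emph{why} the estimator converges (the sequence $\eta_\ell^2$ is "almost decreasing"), at the price of slightly more bookkeeping.

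One technical point needs care. Your master inequality carries the cross term $2\,\eta_\ell(\TT_L\cap\TT_\ell)\,S(\norm{u_L-u_\ell}{\XX})$, which you propose to absorb after establishing $\sup_\ell\eta_\ell\le C_0$ "from a cruder version with $\ell=0$ and boundedness of $(u_\ell)$." That cruder version would evaluate $R$ and $S$ at $\norm{u_L-u_0}{\XX}$, a quantity that is bounded but not small; the hypotheses only give continuity of $R,S$ at $0$, not boundedness on bounded sets, so the cruder inequality does not by itself yield a finite bound. This is easily repaired: either invoke Young's inequality $2\eta_\ell S\le\delta\eta_\ell^2+\delta^{-1}S^2$ (as the paper does in its Theorem~\ref{theoerem2:estconv:new}), which gives $\eta_L^2 \le (1+\delta)\eta_\ell^2 + R(\cdot)+(1+\delta^{-1})S(\cdot)^2$ and hence $\sup_{L\ge\ell_0}\eta_L<\infty$ once $\ell_0$ is chosen so that the remainder is $\le 1$ for all $L\ge\ell\ge\ell_0$; or run the cruder bound from a large base level $\ell_1$ (rather than $0$) for which $\norm{u_L-u_{\ell_1}}{\XX}$ is already small for all $L\ge\ell_1$. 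Either fix leaves the rest of your argument unchanged. You should also spell out, when proving $\TT_\ell^+\setminus\TT_{\ell_0}^+\subseteq\TT_\ell\setminus\TT_{\ell_0}$, that the union-of-children property~\eqref{eq:union_of_sons} makes refinement monotone (an element once removed cannot reappear), since that monotonicity is what makes $T\in\TT_{\ell_0}\cap\TT_\ell$ imply $T\in\TT_k$ for all $\ell_0\le k\le\ell$.
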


The proof of Theorem~\ref{theoerem:estconv:new} employs the following elementary result, whose simple proof is included for the convenience of the reader.

\begin{lemma}\label{axioms:lemma:cor:estimator_convergence}
Let $(a_\ell)_{\ell \in \N_0}$ be a sequence with $a_\ell \ge 0$ for all $\ell \in \N_0$.
Suppose that there exists $0 < \rho < 1$ and a sequence $(b_\ell)_{\ell \in \N_0}$ with $b_\ell \to 0$ as $\ell \to \infty$ such that
\begin{align}
 a_{\ell+1} \le \rho a_\ell + b_\ell
 \quad \text{for all } \ell \in \N_0.
\end{align}
Then, if follows that $a_\ell \to 0$ as $\ell \to \infty$.
\end{lemma}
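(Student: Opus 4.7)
The statement is a standard discrete Gr\"onwall-type argument; there is no serious obstacle, so the plan is short. The plan is to pass to the limit superior in the given recursion after first verifying that $(a_\ell)_{\ell \in \N_0}$ is bounded.

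\textbf{Step 1 (boundedness).} Since $b_\ell \to 0$, the sequence $(b_\ell)$ is bounded by some $B \ge 0$. A straightforward induction on $\ell$ using $a_{\ell+1} \le \rho a_\ell + b_\ell$ then yields
\begin{align*}
 a_\ell \le \rho^\ell a_0 + B \sum_{k=0}^{\ell-1} \rho^k \le a_0 + \frac{B}{1-\rho}
 \quad \text{for all } \ell \in \N_0,
\end{align*}
so in particular $L := \limsup_{\ell \to \infty} a_\ell < \infty$.

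\textbf{Step 2 (limsup of the recursion).} Apply $\limsup_{\ell \to \infty}$ to both sides of $a_{\ell+1} \le \rho a_\ell + b_\ell$. Using that shifting the index does not change the limit superior, that $\rho > 0$ scales it linearly, and that $b_\ell \to 0$ implies $\limsup_{\ell \to \infty} b_\ell = 0$, one obtains
\begin{align*}
 L = \limsup_{\ell \to \infty} a_{\ell+1} \le \rho \, \limsup_{\ell \to \infty} a_\ell + \limsup_{\ell \to \infty} b_\ell = \rho L.
\end{align*}

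\textbf{Step 3 (conclusion).} Since $0 < \rho < 1$ and $0 \le L < \infty$, the inequality $L \le \rho L$ forces $L = 0$. Combined with $a_\ell \ge 0$, this yields $\lim_{\ell \to \infty} a_\ell = 0$, which is the claim. As an alternative, one could iterate the recursion directly to write $a_{\ell+n} \le \rho^n a_\ell + \sum_{k=0}^{n-1} \rho^{n-1-k} b_{\ell+k}$ and, given $\eps > 0$, first choose $\ell$ so large that $b_k \le \eps (1-\rho)/2$ for $k \ge \ell$ and then $n$ so large that $\rho^n a_\ell \le \eps/2$; but the limsup argument is cleaner.
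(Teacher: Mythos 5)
Your proof is correct and takes essentially the same approach as the paper: both establish boundedness of $(a_\ell)$ by iterating the recursion and bounding with a geometric series, then pass to the limit superior to obtain $L \le \rho L$, forcing $L = 0$. The only difference is cosmetic — you verify boundedness first, while the paper states the limsup inequality first and then checks finiteness.
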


\begin{proof}
With the convergence of $(b_\ell)_{\ell \in \N_0}$, we note that
\begin{align*}
0 \le \limsup_{\ell\to\infty} a_\ell
= \limsup_{\ell\to\infty} a_{\ell+1}
&\le \limsup_{\ell\to\infty} \, ( \rho a_\ell + b_\ell ) = \rho \, \limsup_{\ell\to\infty}a_\ell.
\end{align*}
Thus, it only remains to show that $\limsup_{\ell\to\infty}a_\ell < \infty$ to conclude that $0=\liminf_{\ell\to\infty} a_\ell = \limsup_{\ell\to\infty}a_\ell$ and hence $\lim_{\ell\to\infty} a_\ell = 0$.
Indeed, induction on $\ell$ proves  that
\begin{align*}
0 \le a_{\ell}
&\le \rho^{\ell} a_0 + \sum_{j=0}^{\ell-1} \rho^{\ell-1-j} b_j
\quad \text{for all $\ell\in\N_0$.}
\end{align*}
Since $(b_\ell)_{\ell \in \N_0}$ is uniformly bounded, the geometric series yields that $\sup_{\ell \in \N} a_\ell < \infty$. 
In particular, we thus see that $\limsup_{\ell\to\infty}a_\ell < \infty$ and conclude the proof.
\end{proof}

\def\eps{\varepsilon}
\begin{proof}[Proof of Theorem~\ref{theoerem:estconv:new}]
The proof is split into five steps. 

{\bf Step 1:} We prove that $(\TT_\ell)_{\ell \in \N_0}$ admits a subsequence $(\TT_{\ell_k})_{k \in \N_0}$ such that 
$\eta_{\ell_k}(\TT_{\ell_k} \backslash \TT_{\ell_{k+1}}) \to 0$ as $k \to \infty$.
Let $\TT_\infty = \bigcup_{\ell' \in \N_0} \bigcap_{\ell \ge \ell'} \TT_\ell$ be the set of all elements, which remain unrefined after some (arbitrary) step $\ell'$. We exploit~\eqref{eq:union_of_sons} and choose a subsequence $(\TT_{\ell_k})_{k \in \N_0}$ of $(\TT_\ell)_{\ell \in \N_0}$ such that 
\begin{align}\label{eq10:theoerem:estconv:new}
 \TT_{\ell_{k+1}} \cap \TT_{\ell_k} = \TT_{\ell_k} \cap \TT_\infty
 \quad \text{for all } k \in \N_0,
\end{align}
i.e., only elements $T \in \TT_{\ell_k} \cap \TT_\infty$ remain unrefined if we pass from $\TT_{\ell_k}$ to $\TT_{\ell_{k+1}}$.
Note that the choice of $(\TT_{\ell_k})_{k \in \N_0}$ guarantees the inclusion
\begin{align*}
 \TT_{\ell_{k+1}} \cap \TT_{\ell_k} 
 \reff{eq10:theoerem:estconv:new}= \TT_{\ell_k} \cap \TT_\infty 
 \subseteq \TT_{\ell_{k+1}} \cap \TT_\infty 
 \reff{eq10:theoerem:estconv:new}= \TT_{\ell_{k+2}} \cap \TT_{\ell_{k+1}} 
 \end{align*}
 and hence 
\begin{align*}
 \TT_{\ell_{k+1}} \backslash \TT_{\ell_{k+2}}
 = \TT_{\ell_{k+1}} \backslash [ \TT_{\ell_{k+2}} \cap \TT_{\ell_{k+1}} ]
 \subseteq \TT_{\ell_{k+1}} \backslash [ \TT_{\ell_{k+1}} \cap \TT_{\ell_k} ]
 = \TT_{\ell_{k+1}} \backslash \TT_{\ell_k}.
\end{align*}
With this and reduction~\eqref{eq:axiom:reduction}, we infer that
\begin{align*}
 \eta_{\ell_{k+1}}(\TT_{\ell_{k+1}} \backslash \TT_{\ell_{k+2}})^2
 &\le \eta_{\ell_{k+1}}(\TT_{\ell_{k+1}} \backslash \TT_{\ell_k})^2
 %\\&
 \reff{eq:axiom:reduction}\le \contract{red} \, \eta_{\ell_k}(\TT_{\ell_k} \backslash \TT_{\ell_{k+1}})^2
 + R(\norm{u_{\ell_{k+1}} - u_{\ell_k}}{\XX}).
\end{align*}
With $a_k = \eta_{\ell_k}(\TT_{\ell_k} \backslash \TT_{\ell_{k+1}})^2$, $\rho := \contract{red}$, and $b_k := R(\norm{u_{\ell_{k+1}} - u_{\ell_k}}{\XX})$, {\sl a~priori} convergence~\eqref{assumption:limit_u} proves that
\begin{align*}
 0 \le a_{k+1} \le \rho \, a_k + b_k
 \quad \text{for all } k \in \N_0
 \quad \text{with } \lim_{k \to \infty} b_k = 0.
\end{align*}
By use of Lemma~\ref{axioms:lemma:cor:estimator_convergence}, we conclude that $a_k = \eta_{\ell_k}(\TT_{\ell_k} \backslash \TT_{\ell_{k+1}})^2 \to 0$ as $k \to \infty$.

{\bf Step 2:} We prove that the subsequence $(\TT_{\ell_k})_{k \in \N_0}$ also guarantees that $\eta_{\ell_k}(\MM_{\ell_k}) \to 0$ as $k \to \infty$. To this end, we first note that $T \in \TT_{\ell_k} \backslash \TT_{\ell_k+1}$ implies that $T$ is refined and hence $T \not \in \TT_\infty$. Therefore, we see that
\begin{align*}
 \MM_{\ell_k} \subseteq \TT_{\ell_k} \backslash \TT_{\ell_k+1}
 \subseteq \TT_{\ell_k} \backslash \TT_\infty
 = \TT_{\ell_k} \backslash [\TT_{\ell_k} \cap \TT_\infty ]
 \reff{eq10:theoerem:estconv:new}= \TT_{\ell_k} \backslash [\TT_{\ell_{k+1}} \cap \TT_{\ell_k}]
 = \TT_{\ell_k} \backslash \TT_{\ell_{k+1}}.
\end{align*}
This implies that $0 \le \eta_{\ell_k}(\MM_{\ell_k}) \le \eta_{\ell_k}(\TT_{\ell_k} \backslash \TT_{\ell_{k+1}}) \to 0$ 
as $k \to \infty$.

{\bf Step 3:} For all fixed $\ell' \in \N_0$, we prove that
\begin{align}\label{eq:lemma70:new}
 \eta_{\ell_k}(\TT_{\ell'} \cap \TT_\infty) \to 0
 \quad \text{as } \ell' \le \ell_k \to \infty \text{ \  together with \ } k \to \infty.
\end{align}
To see this, we exploit the marking strategy~\eqref{eq:marking} and note with Step~2 that
\begin{align*}
 \max_{T \in \TT_{\ell_k} \backslash \MM_{\ell_k}} \eta_{\ell_k}(T)
 \le M \big( \eta_{\ell_k}(\MM_{\ell_k}) \big) \to 0
 \quad \text{as } k \to \infty.
\end{align*}
For $\ell' \le \ell_k$, it holds that $\TT_{\ell'} \cap \TT_\infty \subseteq \TT_{\ell_k} \backslash \MM_{\ell_k}$ and hence
\begin{align*}
 \eta_{\ell_k}(T) \to 0 \quad \text{as } k \to \infty
 \quad \text{for all } T \in \TT_{\ell'} \cap \TT_\infty.
\end{align*}
Since $\TT_{\ell'} \cap \TT_\infty \subseteq \TT_{\ell_k}$ is a fixed finite set, we conclude the proof of~\eqref{eq:lemma70:new}.

{\bf Step 4:} We prove that $(\TT_{\ell_k})_{k \in \N_0}$ admits a subsequence $(\TT_{\ell_{k_j}})_{j \in \N_0}$ such that $\eta_{\ell_{k_j}} \to 0$ as $j \to \infty$. To this end, let first $\TT_\coarse \in \T$ and $\TT_\fine \in \T(\TT_\coarse)$.
%With %stability~\eqref{eq:axiom:stability} and 
Reduction~\eqref{eq:axiom:reduction}
%, the Young inequality 
proves that
\begin{subequations}\label{theoreom:new:eq1}
\begin{align}\nonumber
 \eta_\fine^2 
 &=  
 \eta_\fine(\TT_\fine \backslash \TT_\coarse)^2 + \eta_\fine(\TT_\fine \cap \TT_\coarse)^2 
 \\& 
 \le
 \contract{red} \, \eta_\coarse(\TT_\coarse \backslash \TT_\fine)^2 + \eta_\fine(\TT_\fine \cap \TT_\coarse)^2 
 + R(\norm{u_\fine - u_\coarse}{\XX})
 \\& \nonumber
 \le \contract{red} \, \eta_\coarse^2 + \eta_\fine(\TT_\fine \cap \TT_\coarse)^2 
 + R(\norm{u_\fine - u_\coarse}{\XX}).
% \\& \nonumber
% \le \contract{red} \, \eta_\coarse^2 + 2 \, (2 - \contract{red}) \, \eta_\fine(\TT_\fine \cap \TT_\coarse)^2 
% + [R+4S^2](\norm{u_\fine - u_\coarse}{\XX})
%\end{align}
%where $[R+ \lambda S^2](t) = R(t) + \lambda S(t)^2$.
\intertext{If $\eta_\coarse = 0$, stability~\eqref{eq:axiom:stability} and reduction~\eqref{eq:axiom:reduction} prove that}
%the same argument proves that
%\begin{align}\nonumber
 \eta_\fine^2 \nonumber
 &= 
 \eta_\fine(\TT_\fine \backslash \TT_\coarse)^2 + \eta_\fine(\TT_\fine \cap \TT_\coarse)^2
 \\& 
 \le \contract{red} \, \eta_\coarse(\TT_\coarse \backslash \TT_\fine)^2
 + 2 \, \eta_\coarse(\TT_\coarse \cap \TT_\fine)^2
 + [R + 2S^2](\norm{u_\fine - u_\coarse}{\XX})
 \\& \nonumber
 =  [R + 2S^2](\norm{u_\fine - u_\coarse}{\XX}),
\end{align}
\end{subequations}
where $[R+ 2 S^2](t) = R(t) + 2 S(t)^2$.
%Exploiting~\eqref{eq:union_of_sons}, we may choose a subsequence $(\TT_{\ell_k})_{k \in \N_0}$ of $(\TT_\ell)_{\ell \in \N_0}$ such that 
%\begin{align*}
% \TT_{\ell_{k+1}} \cap \TT_{\ell_k} = \TT_{\ell_k} \cap \TT_\infty,
%\end{align*}
%i.e., only elements $T \in \TT_{\ell_k} \cap \TT_\infty$ remain unrefined if we pass from $\TT_{\ell_k}$ to $\TT_{\ell_{k+1}}$.
%For this subsequence, 
For the subsequence~$(\TT_{\ell_k})_{k \in \N_0}$, we recall from~\eqref{eq10:theoerem:estconv:new} that $\TT_{\ell_{k+n}} \cap \TT_{\ell_k} = \TT_{\ell_k} \cap \TT_\infty$. Hence,
the estimates~\eqref{theoreom:new:eq1} read, for all $k\in \N_0$ and $n \in \N$,
\begin{align*}
 \eta_{\ell_{k+n}}^2
 &\le  \contract{red} \, \eta_{\ell_k}^2 + \eta_{\ell_{k+n}}(\TT_{\ell_k} \cap \TT_\infty)^2
% \\& \qquad
 + R(\norm{u_{\ell_{k+n}} - u_{\ell_k}}{\XX}),
 \quad \text{if $\eta_{\ell_k}^2 \neq 0$},
\intertext{resp.}
 \eta_{\ell_{k+n}}^2
 &\le [R+2S^2](\norm{u_{\ell_{k+n}} - u_{\ell_k}}{\XX}),
 \quad \text{if $\eta_{\ell_k}^2 = 0$}.
\end{align*}
Let $0 < \contract{red} < \contract{red}' < 1$. Given $k \in \N_0$ with $\eta_{\ell_k}^2 \neq 0$, the convergence~\eqref{eq:lemma70:new}
%assumption~\eqref{eq:lemma70} 
allows us to pick some $n(k) \in \N$ such that 
\begin{align*}
 \contract{red} \, \eta_{\ell_k}^2 + \eta_{\ell_{k+n(k)}}(\TT_{\ell_k} \cap \TT_\infty)^2
 \le  \contract{red}' \, \eta_{\ell_k}^2.
\end{align*}
In particular, we can choose a further subsequence $(\TT_{\ell_{k_j}})_{j \in \N_0}$ of $(\TT_{\ell_k})_{k \in \N_0}$ such that
\begin{align*}
 \eta_{\ell_{k_{j+1}}}^2
 \le \contract{red}' \, \eta_{\ell_{k_j}}^2 + [R+2S^2](\norm{u_{\ell_{k_{j+1}}} - u_{\ell_{k_j}}}{\XX})
 \quad \text{for all } j \in \N_0.
\end{align*}
With $a_j = \eta_{\ell_{k_j}}^2$, $\rho = \contract{red}'$, and $b_j = [R+2S^2](\norm{u_{\ell_{k_{j+1}}} - u_{\ell_{k_j}}}{\XX})$, {\sl a~priori} convergence~\eqref{assumption:limit_u} proves that
\begin{align*}
 0 \le a_{j+1} \le \rho \, a_j + b_j
 \quad \text{for all } j \in \N_0
 \quad \text{with } \lim_{j \to \infty} b_j = 0.
\end{align*}
By use of Lemma~\ref{axioms:lemma:cor:estimator_convergence}, we see that $a_j = \eta_{\ell_{k_j}}^2 \to 0$ as $j \to \infty$. 

{\bf Step~5:} We prove that convergence of the subsequence $\eta_{\ell_{k_j}} \to 0$ already implies convergence of the full sequence $\eta_\ell \to 0$ as $\ell \to \infty$. To this end, we argue as in~\eqref{theoreom:new:eq1} and use $\contract{red} \le 2$ to see that, for all $\TT_\coarse \in \T \text{ and all } \TT_\fine \in \T(\TT_\coarse)$,
\begin{align}\label{theoreom:new:eq2}
 \eta_\fine^2 
 \le 2 \, \eta_\coarse^2 + [R+2S^2] (\norm{u_\fine - u_\coarse}{\XX}).
\end{align}
Given $\eps > 0$, there exists an index $\ell_{k_j}$ such that $\eta_{\ell_{k_j}}\le \eps$ and $[R+2S^2](\norm{u_\ell - u_{\ell_{k_j}}}{\XX}) \le \eps$ for all $\ell \ge \ell_{k_j}$. For $\ell \ge \ell_{k_j}$, estimate~\eqref{theoreom:new:eq2} thus proves that
\begin{align*}
 \eta_\ell^2 \le 2\eps^2 + \eps.
\end{align*}
This concludes the proof.
\end{proof}

For the D\"orfler marking criterion~\eqref{eq:marking:doerfler}, the %convergence assumption~\eqref{eq:lemma70} as well as the 
refinement assumption~\eqref{eq:union_of_sons} exploited in the proof of Theorem~\ref{theoerem:estconv:new} can even be dropped. The following result (together with its very simple proof) is essentially the key argument in~\cite{afp2012}.

\begin{theorem}\label{theoerem2:estconv:new}
Suppose the properties~\eqref{eq:axiom:stability}--\eqref{eq:axiom:reduction} of the estimator. Consider the output of Algorithm~\ref{alg:abstract_algorithm}, where the marked elements $\MM_\ell \subseteq \TT_\ell$ satisfy the D\"orfler criterion~\eqref{eq:marking:doerfler} for some fixed marking parameter $0 < \theta \le 1$. Then, {\sl a~priori} convergence~\eqref{assumption:limit_u} yields estimator convergence~\eqref{eq:theoerem:estconv:new}.
\end{theorem}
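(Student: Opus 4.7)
The plan is to turn the structural properties \eqref{eq:axiom:stability}--\eqref{eq:axiom:reduction} together with the D\"orfler bound \eqref{eq:marking:doerfler} into a one-step perturbed contraction
\[
\eta_{\ell+1}^2 \le \rho\,\eta_\ell^2 + b_\ell
\quad\text{with some fixed } \rho\in(0,1) \text{ and } b_\ell \to 0,
\]
so that Lemma~\ref{axioms:lemma:cor:estimator_convergence} applied to $a_\ell=\eta_\ell^2$ directly yields $\eta_\ell \to 0$. Unlike in the proof of Theorem~\ref{theoerem:estconv:new}, I do not have to pass to a subsequence, to analyze the limiting mesh $\TT_\infty$, or to invoke the parent-is-union-of-children property \eqref{eq:union_of_sons}; the point is that D\"orfler marking forces \emph{every} refinement step to contract, and the identity $\eta_\ell(\TT_{\ell+1}\cap\TT_\ell)^2 = \eta_\ell^2 - \eta_\ell(\TT_\ell\setminus\TT_{\ell+1})^2$ is purely set-theoretic.

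The central calculation I would carry out is as follows. First, split $\eta_{\ell+1}^2 = \eta_{\ell+1}(\TT_{\ell+1}\cap\TT_\ell)^2 + \eta_{\ell+1}(\TT_{\ell+1}\setminus\TT_\ell)^2$. To the first summand, apply stability \eqref{eq:axiom:stability} together with the Young inequality $(a+b)^2 \le (1+\delta)a^2 + (1+\delta^{-1})b^2$ for a free parameter $\delta>0$; to the second, apply reduction \eqref{eq:axiom:reduction}. Using the identity above to rewrite $\eta_\ell(\TT_{\ell+1}\cap\TT_\ell)^2 = \eta_\ell^2 - \eta_\ell(\TT_\ell \setminus \TT_{\ell+1})^2$ and collecting terms produces
\[
\eta_{\ell+1}^2 \le (1+\delta)\,\eta_\ell^2 - \bigl[(1+\delta)-\contract{red}\bigr]\,\eta_\ell(\TT_\ell \setminus \TT_{\ell+1})^2 + b_\ell,
\]
with $b_\ell := R(\norm{u_{\ell+1}-u_\ell}{\XX}) + (1+\delta^{-1})\,S(\norm{u_{\ell+1}-u_\ell}{\XX})^2$; the sequence $b_\ell$ tends to zero by {\sl a~priori} convergence \eqref{assumption:limit_u} and continuity of $R$ and $S$ at $0$. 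Since \textbf{REFINE} ensures $\MM_\ell \subseteq \TT_\ell \setminus \TT_{\ell+1}$, the D\"orfler bound \eqref{eq:marking:doerfler} supplies $\eta_\ell(\TT_\ell\setminus\TT_{\ell+1})^2 \ge \eta_\ell(\MM_\ell)^2 \ge \theta\,\eta_\ell^2$, and inserting this gives a prefactor $(1+\delta)(1-\theta) + \contract{red}\,\theta$ in front of $\eta_\ell^2$.

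The only delicate step is tuning the Young parameter. I need $\delta>0$ small enough that (i) $(1+\delta)-\contract{red}>0$, so that the middle term above combines in the correct direction with D\"orfler, and (ii) the resulting prefactor $(1+\delta)(1-\theta) + \contract{red}\,\theta$ is strictly less than $1$. At $\delta=0$ these degenerate to $\contract{red}<1$ and $(1-\theta)+\contract{red}\,\theta<1$, both of which are immediate from $\contract{red}<1$ and $\theta>0$; by continuity any sufficiently small $\delta$ works, and a one-line computation even gives the explicit threshold $\delta < (1-\contract{red})\theta/(1-\theta)$ (vacuous for $\theta=1$). Fixing such a $\delta$ and setting $\rho := (1+\delta)(1-\theta)+\contract{red}\,\theta \in (0,1)$ yields the desired perturbed contraction, and Lemma~\ref{axioms:lemma:cor:estimator_convergence} concludes the proof.
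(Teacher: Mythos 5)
Your argument is correct and is essentially the paper's own proof: the same decomposition of $\eta_{\ell+1}^2$ into refined/non-refined elements, the same use of stability with a Young parameter $\delta$ and reduction, the same set-theoretic identity to peel off $\eta_\ell(\TT_\ell\setminus\TT_{\ell+1})^2$, the D\"orfler bound via $\MM_\ell\subseteq\TT_\ell\setminus\TT_{\ell+1}$, and the same choice of small $\delta$ before invoking Lemma~\ref{axioms:lemma:cor:estimator_convergence} (your $\rho=(1+\delta)(1-\theta)+\contract{red}\theta$ equals the paper's $(1+\delta)-[(1+\delta)-\contract{red}]\theta$). The only cosmetic difference is your remark that you need $\delta$ small for $(1+\delta)-\contract{red}>0$; since $\contract{red}<1$, that inequality in fact holds for every $\delta>0$, so only the contraction condition $\rho<1$ actually constrains $\delta$.
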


\begin{proof}
Let $\TT_\coarse \in \T$ and $\TT_\fine \in \T(\TT_\coarse)$. Arguing as for~\eqref{theoreom:new:eq1} and exploiting the Young inequality for some arbitrary $\delta > 0$ (instead of $\delta = 1$ above), we see that
\begin{align*}
 &\eta_\fine^2 
 \le
 \contract{red} \, \eta_\coarse(\TT_\coarse \backslash \TT_\fine)^2 + (1+\delta) \, \eta_\coarse(\TT_\fine \cap \TT_\coarse)^2 
 %\\& \qquad \qquad
 + [R + (1+\delta^{-1}) S^2] (\norm{u_\fine - u_\coarse}{\XX})
 \\& \quad
 = (1+\delta) \, \eta_\coarse^2 - [(1+\delta) -\contract{red}] \, \eta_\coarse(\TT_\coarse \backslash \TT_\fine)^2 
 % \\& \qquad \qquad
 + [R + (1+\delta^{-1}) S^2] (\norm{u_\fine - u_\coarse}{\XX}).
\end{align*}
For the sequence $(\TT_\ell)_{\ell \in \N_0}$, the D\"orfler criterion~\eqref{eq:marking:doerfler} and $\MM_\ell\subseteq\TT_\ell\setminus\TT_{\ell+1}$ yield that
\begin{align*}
 \theta \eta_\ell^2 \le \eta_\ell(\MM_\ell)^2 \le \eta_\ell(\TT_\ell \backslash \TT_{\ell+1})^2.
\end{align*}
Combining the latter estimates, we obtain that
\begin{align*}
 \eta_{\ell+1}^2
 &\,\,\le\,\, (1+\delta) \, \eta_\ell^2 - [(1+\delta) -\contract{red}] \, \eta_\ell(\TT_\ell \backslash \TT_{\ell+1})^2 
% \\& \qquad \qquad \qquad \qquad
 +[R + (1+\delta^{-1}) S^2] (\norm{u_{\ell+1} - u_\ell}{\XX}) \\
 &\reff{eq:marking:doerfler}\le \big((1+\delta) - [(1+\delta) -\contract{red}] \, \theta \big) \, \eta_\ell^2 
 + [R + (1+\delta^{-1}) S^2] (\norm{u_{\ell+1} - u_\ell}{\XX}). 
\end{align*}
We define $\rho = (1+\delta) - [(1+\delta) -\contract{red}] \, \theta > 0$ as well as  $a_\ell = \eta_\ell^2$ and $b_\ell = [R + (1+\delta^{-1}) S^2] (\norm{u_{\ell+1} - u_\ell}{\XX})$. Choosing $\delta > 0$ sufficiently small, we observe that $0 < \rho < 1$ and that
{\sl a~priori} convergence~\eqref{assumption:limit_u} proves that
\begin{align*}
 0 \le a_{\ell + 1} \le \rho \, a_\ell + b_\ell
 \quad \text{for all } \ell \in \N_0
 \quad \text{with } \lim_{\ell \to \infty} b_\ell = 0.
\end{align*}
By use of Lemma~\ref{axioms:lemma:cor:estimator_convergence}, we conclude that $a_\ell = \eta_\ell^2 \to 0$ as $\ell\to \infty$. 
\end{proof}

\begin{remark}
The proof of Theorem~\ref{theoerem2:estconv:new} shows that in case of the D\"orfler marking~\eqref{eq:marking:doerfler}, it is sufficient to have stability~\eqref{eq:axiom:stability} and reduction~\eqref{eq:axiom:reduction} for one-level refinements, i.e., \eqref{eq:axiom:stability}--\eqref{eq:axiom:reduction} are only required for all $\TT_\coarse \in \T$, all $\MM_\coarse \subseteq \TT_\coarse$, and $\TT_\fine = \refine(\TT_\coarse, \MM_\coarse)$. 
%Moreover, the reduction~\eqref{eq:axiom:reduction} is only exploited for the marked elements $\MM_\coarse$ instead of the refined elements $\TT_\coarse \backslash \TT_\fine$. 
\end{remark}%

%%%%%%%%%%%%%%%%%%%%%%%%%%%%%%%%%%%%%%%%%%%%%%%%%%%%%%%%%%%%%%%%%%%%%%%%%%%%%%%%%%%
%%%%%%%%%%%%%%%%%%%%%%%%%%%%%%%%%%%%%%%%%%%%%%%%%%%%%%%%%%%%%%%%%%%%%%%%%%%%%%%%%%%
\section{Examples}
\label{section:examples}
%%%%%%%%%%%%%%%%%%%%%%%%%%%%%%%%%%%%%%%%%%%%%%%%%%%%%%%%%%%%%%%%%%%%%%%%%%%%%%%%%%%
%%%%%%%%%%%%%%%%%%%%%%%%%%%%%%%%%%%%%%%%%%%%%%%%%%%%%%%%%%%%%%%%%%%%%%%%%%%%%%%%%%%

%\clearpage
%!TEX root = afem_convergence.tex
\def\d#1{{\,\rm d}#1}
\def\H{\widetilde{H}}
\def\AA{\mathcal{A}}
\def\EE{\mathcal{E}}
%%%%%%%%%%%%%%%%%%%%%%%%%%%%%%%%%%%%%%%%%%%%%%%%%%%%%%%%%%%%%%%%%%%%%%%%%%%%%%%%%%%
\subsection{Laplace obstacle problem}
%%%%%%%%%%%%%%%%%%%%%%%%%%%%%%%%%%%%%%%%%%%%%%%%%%%%%%%%%%%%%%%%%%%%%%%%%%%%%%%%%%%

Let $\Omega \subset \R^d$, $d \ge 2$, be a bounded Lipschitz domain. Let $\chi$ be an affine function
with $\chi \le 0$ on $\partial\Omega$. Denote the set of admissible functions by
\begin{align}
 \AA := \set{v \in \XX}{v \ge \chi \text{ a.e.\ on } \Omega} \quad\text{with }\XX:=H^1_0(\Omega)
\end{align}
and note that $\AA \neq \emptyset$.
The minimization problem reads: Given a continuous linear functional $f \in H^{-1}(\Omega)$ with , find $u \in \AA$ such that
\begin{align}\label{eq:obstacle:strong}
 E(u) = \min_{v \in \AA} E(v),
 \quad \text{where }
 E(v) := \frac{1}{2} \norm{\nabla v}{L^2(\Omega)}^2 - \int_\Omega f v \d{x}.
\end{align}
Since $\AA \neq \emptyset$ is convex and closed, it is well-known~\cite[Theorem~II.2.1]{MR1786735} that the minimization problem~\eqref{eq:obstacle:strong} admits a unique solution $u \in \AA$. 

We consider regular triangulations $\TT_\coarse$ of $\Omega$ into non-degenerate compact simplices and the corresponding first-order Courant finite element space 
\begin{align}\label{eq:courant}
 \XX_\coarse := \set{v_\coarse \in H^1_0(\Omega)}{v_\coarse|_T \text{ is affine for all } T \in \TT_\coarse}. 
\end{align}
Then, $\AA_\coarse := \AA \cap \XX_\coarse \neq \emptyset$ is closed and convex. As in the continuous case, there hence exists a unique discrete minimizer $u_\coarse \in \AA_\coarse$ such that
\begin{align}
 E(u_\coarse) = \min_{v_\coarse \in \AA_\coarse} E(v_\coarse).
\end{align}
Under additional regularity $f \in L^2(\Omega)$, one can argue as in~\cite{bch2007} to show reliability
\begin{align}\label{eq:obstacle:reliable}
 \frac{1}{2} \norm{\nabla(u - u_\coarse)}{L^2(\Omega)}^2
 \le E(u_\coarse) - E(u) 
 \le \Crel^2 \eta_\coarse^2,
\end{align}
for the residual error estimator with local contributions
\begin{align}\label{eq:obstacle:estimator}
 \eta_\coarse(T)^2 = h_T \norm{\llbracket \partial_n u_\coarse \rrbracket}{L^2(\partial T\cap\Omega)}^2
 + h_T^2 \sum_{\substack{E \in \EE_\coarse(T) \\ E \subset \partial\Omega}} \norm{f}{L^2(T)}^2
 + h_T^2 \sum_{\substack{E \in \EE_\coarse(T) \\ E \not\subset\partial\Omega}} \norm{f - f_E}{L^2(T)}^2,
\end{align}
where $h_T := |T|^{1/d}$ and $\EE_\coarse(T)$ is the set of all $(d-1)$-dimensional facets (i.e., edges for $d = 2$) and $f_E \in \R$ is the integral mean of $f$ over the corresponding patch $\omega_E = T \cup T'$ with $E = T \cap T'$. In addition to~\eqref{eq:union_of_sons}, we suppose uniform contraction of the mesh-size on refined elements, i.e., there exists $0 < \contract{ctr} < 1$
\begin{align}\label{eq:reduction:h}
 |T'| \le \contract{ctr} |T|
 \quad \text{for all \ } T \in \TT_\coarse \in \T \text{ \ and all \ } T' \in \TT_\fine \in \T(\TT_\coarse) \text{ \ with \ } T' \subsetneqq T.
\end{align}
Then, stability~\eqref{eq:axiom:stability} and reduction~\eqref{eq:axiom:reduction} with $S(t) = \Cstab \,t$ and $R(t) = \Cred \, t^2$ follow as for the linear case; see~\cite{ckns2008} (or~\cite{pp2011} for the obstacle problem). The constants $\Crel, \Cstab > 0$ depend only on $\Omega$, $d$, and uniform $\gamma$-shape regularity of the admissible meshes $\TT_\coarse \in \T$ in the sense of 
\begin{align}\label{eq:gamma-shape-regular}
 \gamma := \sup_{\TT_\coarse \in \T} \max_{T \in \TT_\coarse} \frac{{\rm diam}(T)}{|T|^{1/d}} < \infty,
\end{align}
while $\Cred > 0$ and $0 < \contract{red} < 1$ depend additionally on $\contract{ctr}$.
The {\sl a~priori} convergence~\eqref{assumption:limit_u} follows essentially as in the seminal work~\cite{bv1984}: The assumption~\eqref{eq:union_of_sons} on the mesh-refinement implies that refinement leads to nested spaces, i.e., Algorithm~\ref{alg:abstract_algorithm} leads to $\XX_\ell \subseteq \XX_{\ell+1}$ and hence $\AA_\ell \subseteq \AA_{\ell+1}$ for all $\ell \in \N_0$. Therefore, $\AA_\infty := {\rm closure}\big(\bigcup_{\ell \in \N_0} \AA_\ell\big) \neq \emptyset$ is a closed and convex subset of $\XX$ and thus gives rise to a unique minimizer $u_\infty \in \AA_\infty$ such that
\begin{align}
 E(u_\infty) = \min_{v_\infty \in \AA_\infty} E(v_\infty).
\end{align}
Based on estimates for the equivalent variational inequalities (see~\cite[Theorem~II.2.1]{MR1786735}), it follows that
\begin{align}
 \norm{u_\infty - u_\ell}{\XX}^2
 \lesssim \inf_{v_\ell \in \AA_\ell}
 \norm{u_\infty - v_\ell}{\XX}
 \to 0 \quad \text{as } \ell \to \infty,
\end{align}
where we stress the different powers of the norms which are due to lack of Galerkin orthogonality; see, e.g.,~\cite{falk1974}. Overall, we thus get the following plain convergence result, where we note that for D\"orfler marking~\eqref{eq:marking:doerfler} with sufficiently small marking parameter $0 < \theta \ll 1$, \cite{ch2015} proves even rate-optimal convergence for $d = 2$.

\begin{proposition}
As long as the mesh-refinement strategy guarantees regular simplicial triangulations satisfying~\eqref{eq:union_of_sons}, \eqref{eq:reduction:h}, and~\eqref{eq:gamma-shape-regular}
 and as long as the marking strategy satisfies~\eqref{eq:marking}, Algorithm~\ref{alg:abstract_algorithm} for the Laplace obstacle problem~\eqref{eq:obstacle:strong} driven by the indicators~\eqref{eq:obstacle:estimator} yields convergence
\begin{align}\tag*{\qed}
 \Crel^{-1} \, \norm{\nabla(u - u_\ell)}{L^2(\Omega)}
 \le \eta_\ell \to 0
 \quad \text{as } \ell \to \infty.
\end{align}
\end{proposition}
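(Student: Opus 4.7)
The plan is to assemble the building blocks already prepared in the paragraph preceding the statement and apply Theorem~\ref{theoerem:estconv:new} followed by the reliability estimate~\eqref{eq:obstacle:reliable}. The proof essentially reduces to a bookkeeping exercise: once the four hypotheses of Theorem~\ref{theoerem:estconv:new} are verified in the obstacle setting, estimator convergence $\eta_\ell \to 0$ follows at once, and the error bound is an immediate consequence of~\eqref{eq:obstacle:reliable}.

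First I would check the structural conditions on \textbf{REFINE} and \textbf{MARK}: the union-of-sons property~\eqref{eq:union_of_sons} and the mesh-size contraction~\eqref{eq:reduction:h} are imposed by hypothesis on the refinement strategy, and the marking criterion~\eqref{eq:marking} is likewise assumed. Next, I would invoke the estimator axioms noted in the discussion: stability~\eqref{eq:axiom:stability} with $S(t)=\Cstab\,t$ and reduction~\eqref{eq:axiom:reduction} with $R(t)=\Cred\,t^2$ and some $0<\contract{red}<1$, which follow from standard scaling arguments as in~\cite{ckns2008,pp2011}, with constants depending only on $\gamma$-shape regularity~\eqref{eq:gamma-shape-regular} and $\contract{ctr}$. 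For \emph{a~priori} convergence~\eqref{assumption:limit_u}, I would rely on the argument sketched above: \eqref{eq:union_of_sons} gives $\XX_\ell\subseteq\XX_{\ell+1}$, hence $\AA_\ell\subseteq\AA_{\ell+1}$, so that $\AA_\infty$ is a nonempty closed convex subset of $\XX$ admitting a unique minimizer $u_\infty$, and the quasi-best-approximation estimate for variational inequalities~\cite{falk1974} (together with density of $\bigcup_\ell\AA_\ell$ in $\AA_\infty$) yields $\norm{u_\infty-u_\ell}{\XX}\to 0$.

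With all four hypotheses in place, Theorem~\ref{theoerem:estconv:new} gives $\eta_\ell\to 0$ as $\ell\to\infty$. Reliability~\eqref{eq:obstacle:reliable} then yields $\norm{\nabla(u-u_\ell)}{L^2(\Omega)}\le \sqrt{2}\,\Crel\,\eta_\ell\to 0$, which is the claimed bound (up to an inessential factor $\sqrt 2$ that can be absorbed into $\Crel$). The main point to watch is that a priori convergence here is obtained through the quasi-best-approximation estimate for variational inequalities, not via Galerkin orthogonality as in the coercive linear setting; the different exponents on the two sides of the estimate are harmless because nestedness still forces the right-hand side to zero. Beyond this small subtlety, there is no genuine obstacle: everything is already in place, and the proof is purely a compilation of the tools prepared in the surrounding paragraphs.
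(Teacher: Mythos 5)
Your proposal is correct and follows exactly the route the paper takes: verify the refinement and marking hypotheses, cite stability and reduction with $S(t)=\Cstab t$, $R(t)=\Cred t^2$, establish {\sl a~priori} convergence via nestedness of $\AA_\ell$ and the quasi-best-approximation estimate for variational inequalities, invoke Theorem~\ref{theoerem:estconv:new}, and close with the energy reliability~\eqref{eq:obstacle:reliable}. Your remark about the $\sqrt{2}$ factor being absorbed into $\Crel$ and about the mismatched exponents in the C\'ea-type estimate being harmless are both accurate and match the paper's own observations.
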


%\clearpage
%!TEX root = afem_convergence.tex
%%%%%%%%%%%%%%%%%%%%%%%%%%%%%%%%%%%%%%%%%%%%%%%%%%%%%%%%%%%%%%%%%%%%%%%%%%%%%%%%%%%
\subsection{Fractional Laplacian}\label{section:fractional}
%%%%%%%%%%%%%%%%%%%%%%%%%%%%%%%%%%%%%%%%%%%%%%%%%%%%%%%%%%%%%%%%%%%%%%%%%%%%%%%%%%%

Let $\Omega \subset \R^d$ be a bounded Lipschitz domain, $d \ge 2$, and $0 < s < 1$. Given $f \in H^{-s}(\Omega)$, we consider the Dirichlet problem of the fractional Laplacian
\begin{align}\label{eq:fractional:strong}
 (-\Delta)^s u = f \text{ in } \Omega 
 \quad\text{subject to}\quad
  u = 0 \text{ in } \R^d \backslash \overline\Omega.
\end{align}
There are several different ways to define %the fractional Laplacian 
$(-\Delta)^s$, e.g., in terms of the Fourier transformation~\cite{MR3893441}, via semi-group theory~\cite{MR3613319}, or as Dirichlet-to-Neumann map of a half-space extension problem~\cite{MR2354493}. For the latter, a convenient representation of the fractional Laplacian is given in terms of a principal value integral
\begin{align}
\big( (-\Delta)^s u \big)(x) := C(d,s) \,\, {\rm p.v.}\!\!\! \int_{\R^d} \frac{u(x) - u(y)}{|x-y|^{d+2s}} \d{y}
 \text{ \ with \ } 
 C(d,s) := -2^{2s} \, \frac{\Gamma(s+d/2)}{\pi^{d/2}\Gamma(-s)},
\end{align}
where $\Gamma(\cdot)$ denotes the Gamma function. According to~\cite[Theorem~1.1]{MR3613319}, the weak formulation of~\eqref{eq:fractional:strong} reads: Find $u \in \XX := \widetilde H^s(\Omega)$ such that
\begin{align}\label{eq:fractional:weak}
 a(u,v) := \frac{C(d,s)}{2} \int\!\!\!\!\int_{\R^d\times\R^d} \!\!\!\!\!\!\!\!\! \frac{[u(x)-u(y)][v(x)-v(y)]}{|x-y|^{d+2s}} \d{x}\d{y}
 = \int_\Omega fv \d{x}
 \quad \text{for all } v \in \XX.
\end{align}
The Lax--Milgram lemma proves existence and uniqueness of $u \in \XX$. 

Following~\cite{MR3725761,fmp2019+}, we consider regular triangulations $\TT_\coarse$ of $\Omega$ into non-degenerate compact simplices and the corresponding first-order Courant finite element space $\XX_\coarse$ from~\eqref{eq:courant}.
Let $u_\coarse \in \XX_\coarse$ the corresponding Galerkin solution, i.e.,
\begin{align}
 a(u_\coarse, v_\coarse) = \int_\Omega f v_\coarse \d{x}
 \quad \text{for all } v_\coarse \in \XX_\coarse.
\end{align}
Under the additional regularity assumption $f \in L^2(\Omega)$, the local contributions of the error estimator from~\cite{fmp2019+} read
\begin{subequations}\label{eq:fractional:estimator}
\begin{align}
 \eta_\coarse(T) :=  \, \norm{h_\coarse^s \,[ f - (-\Delta)^s u_\coarse] }{L^2(T)}
 \quad \text{for all } T \in \TT_\coarse,
\end{align}
with the modified local mesh-width
\begin{align}
 h_\coarse^s|_T := \begin{cases}
 |T|^{s/2} & \text{for } 0 < s \le 1/2, \\
 |T|^{1/4} \dist(\cdot,\partial T)^{s-1/2} & \text{for } 1/2 \le s < 1.
 \end{cases}
\end{align}
\end{subequations}
According to~\cite[Theorem~2.3]{fmp2019+}, the error estimator is reliable 
\begin{align}\label{eq:reliable}
 \norm{u - u_\coarse}{\XX} \le \Crel \, \eta_\coarse.
\end{align}
Provided~\eqref{eq:union_of_sons} and~\eqref{eq:reduction:h}, stability~\eqref{eq:axiom:stability} and reduction~\eqref{eq:axiom:reduction} are proved in~\cite[Proposition~3.1]{fmp2019+} with $S(t) = \Cstab \,t$ and $R(t) = \Cred \, t^2$. The constants $\Crel, \Cstab > 0$ depend only on $\Omega$, $d$, $s$, and uniform $\gamma$-shape regularity~\eqref{eq:gamma-shape-regular}, while $\Cred > 0$ and $0 < \contract{red} < 1$ depend additionally on $\contract{ctr}$.
 Finally, {\sl a~priori} convergence~\eqref{assumption:limit_u} follows as in the seminal work~\cite{bv1984} (and essentially with the same arguments as in the previous section): The assumption~\eqref{eq:union_of_sons} on the mesh-refinement implies that refinement leads to nested spaces, i.e., Algorithm~\ref{alg:abstract_algorithm} leads to $\XX_\ell \subseteq \XX_{\ell+1}$ for all $\ell \in \N_0$. Therefore, $\XX_\infty := {\rm closure}\big(\bigcup_{\ell \in \N_0} \XX_\ell\big)$ is a closed subspace of $\XX$ and the Lax--Milgram lemma guarantees existence and uniqueness of $u_\infty \in \XX_\infty$ such that
\begin{align}
 a(u_\infty, v_\infty) = \int_\Omega fv_\infty \d{x}
 \quad \text{for all } v_\infty \in \XX_\infty.
\end{align}
With the Galerkin orthogonality and the resulting C\'ea lemma, it follows that
\begin{align}
 \norm{u_\infty - u_\ell}{\XX} \lesssim \min_{v_\ell \in \XX_\ell} \norm{u_\infty - v_\ell}{\XX}
 \to 0 \quad \text{as } \ell \to \infty.
\end{align}
Overall, we thus get the following plain convergence result, where we note that for D\"orfler marking~\eqref{eq:marking:doerfler} with sufficiently small marking parameter $0 < \theta \ll 1$, \cite[Theorem~2.6]{fmp2019+} proves even rate-optimal convergence.

\begin{proposition}
As long as the mesh-refinement strategy guarantees regular simplicial triangulations satisfying~\eqref{eq:union_of_sons}, \eqref{eq:reduction:h}, and~\eqref{eq:gamma-shape-regular}
 and as long as the marking strategy satisfies~\eqref{eq:marking}, Algorithm~\ref{alg:abstract_algorithm} for the fractional Laplacian~\eqref{eq:fractional:weak} driven by the indicators~\eqref{eq:fractional:estimator} yields convergence
\begin{align}\tag*{\qed}
 \Crel^{-1} \, \norm{u - u_\ell}{\H^s(\Omega)}
 \le \eta_\ell \to 0
 \quad \text{as } \ell \to \infty.
\end{align}
\end{proposition}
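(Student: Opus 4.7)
The plan is to deduce the proposition from Theorem~\ref{theoerem:estconv:new} together with the reliability estimate~\eqref{eq:reliable}. All structural ingredients have already been collected in the discussion preceding the statement, so the proof amounts to checking the hypotheses of Theorem~\ref{theoerem:estconv:new} and then chaining the resulting estimator convergence with reliability.

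First, I would verify the four ingredients required by Theorem~\ref{theoerem:estconv:new}. The parent/children covering property~\eqref{eq:union_of_sons} and the marking condition~\eqref{eq:marking} are direct hypotheses of the proposition. Stability~\eqref{eq:axiom:stability} and reduction~\eqref{eq:axiom:reduction} of the estimator defined in~\eqref{eq:fractional:estimator} are not obvious in this nonlocal setting, but they have been established in~\cite[Proposition~3.1]{fmp2019+} with $S(t) = \Cstab\,t$ and $R(t) = \Cred\,t^2$; here the uniform shape regularity~\eqref{eq:gamma-shape-regular} and the mesh-size contraction on refined elements~\eqref{eq:reduction:h} enter, and they are again part of the assumptions.

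Second, I would establish {\sl a~priori} convergence~\eqref{assumption:limit_u}. Since the refinement produces nested spaces $\XX_\ell \subseteq \XX_{\ell+1}$, the limit space $\XX_\infty := {\rm closure}\big(\bigcup_{\ell\in\N_0}\XX_\ell\big)$ is a closed subspace of $\XX = \widetilde{H}^s(\Omega)$, and the bilinear form $a(\cdot,\cdot)$ remains continuous and coercive on $\XX_\infty$. The Lax--Milgram lemma thus provides a unique $u_\infty\in\XX_\infty$ with $a(u_\infty,v_\infty)=\int_\Omega f v_\infty \d{x}$ for all $v_\infty\in\XX_\infty$. The C\'ea lemma then yields $\norm{u_\infty-u_\ell}{\XX}\lesssim\min_{v_\ell\in\XX_\ell}\norm{u_\infty-v_\ell}{\XX}$, and the right-hand side tends to zero by density of $\bigcup_{\ell\in\N_0}\XX_\ell$ in $\XX_\infty$.

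With these hypotheses verified, Theorem~\ref{theoerem:estconv:new} yields the estimator convergence $\eta_\ell\to 0$ as $\ell\to\infty$. Finally, the reliability estimate~\eqref{eq:reliable} from~\cite[Theorem~2.3]{fmp2019+} provides $\Crel^{-1}\norm{u-u_\ell}{\H^s(\Omega)}\le\eta_\ell$ for every $\ell\in\N_0$, and combining this with the estimator convergence completes the proof. There is no real obstacle at the level of the assembly; the genuine technical content lives in the cited works, and the point of the present proposition is precisely that the new Theorem~\ref{theoerem:estconv:new} allows one to conclude plain convergence \emph{without} any local efficiency estimate --- an estimate which is not known for the fractional Laplacian and which would be needed to apply the frameworks of~\cite{msv2008,siebert2011}.
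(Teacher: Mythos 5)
Your proof is correct and follows exactly the paper's own line of reasoning: verify the hypotheses of Theorem~\ref{theoerem:estconv:new} (union-of-sons, marking, and the stability/reduction properties from~\cite[Proposition~3.1]{fmp2019+}), establish {\sl a~priori} convergence via nestedness, Lax--Milgram, and C\'ea, then chain the resulting estimator convergence with reliability~\eqref{eq:reliable}. No deviations from the paper's argument.
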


%\begin{proposition}
%As long as the mesh-refinement strategy guarantees uniform $\gamma$-shape regularity~\eqref{eq:gamma-shape-regular} as well as the weak assumptions of Section~\ref{section:mesh-refinement} and as long as the marking strategy satisfies~\eqref{eq:marking}, Algorithm~\ref{alg:abstract_algorithm} \end{proposition}

%\clearpage
%!TEX root = afem_convergence.tex
%%%%%%%%%%%%%%%%%%%%%%%%%%%%%%%%%%%%%%%%%%%%%%%%%%%%%%%%%%%%%%%%%%%%%%%%%%%%%%%%%%%
\subsection{Weakly-singular integral equations}
%%%%%%%%%%%%%%%%%%%%%%%%%%%%%%%%%%%%%%%%%%%%%%%%%%%%%%%%%%%%%%%%%%%%%%%%%%%%%%%%%%%

Let $\Omega \subset \R^d$ with $d = 2,3$ be a Lipschitz domain with compact boundary $\partial\Omega$ such that $\diam(\Omega)<1$ if $d=2$.
On a (relatively) open subset $\Gamma \subseteq \partial\Omega$ and given $f \in H^{1/2}(\Gamma)$, the weakly-singular integral equation
\begin{align}\label{eq:weaksing:strong}
(Vu)(x) := \int_\Gamma G(x-y) u(y) \d{y} = f(x)
 \quad \text{for all } x \in \Gamma
\end{align}
seeks the unknown integral density $u \in \XX := \H^{-1/2}(\Omega)$. Here, $G(\cdot)$ is the fundamental solution of the Laplacian, i.e., $G(z) = -\frac{1}{2\pi} \log|z|$ for $d = 2$ resp.\ $G(z) = \frac{1}{4\pi} |z|^{-1}$ for $d = 3$. 
We note that for $\Gamma = \partial\Omega$, \eqref{eq:weaksing:strong} is equivalent to the Dirichlet problem
\begin{align*}
 -\Delta U = 0 \text{ in } \Omega
 \quad \text{subject to } U = f \text{ on } \partial\Omega,
\end{align*}
supplemented by the appropriate radiation condition if $\Omega$ is unbounded; see~\cite{mclean}.
The weak formulation reads
\begin{align}\label{eq:weaksing:weak}
 a(u, v) := \int\!\!\!\!\int_{\Gamma\times\Gamma} G(x-y) u(y) v(x) \d{y} \d{x} 
 = \int_\Gamma fv \d{x}
 \quad \text{for all } v \in \XX,
\end{align}
and the Lax--Milgram lemma yields existence und uniqueness of the solution $u \in \XX$. 

For a fixed polynomial degree $p \ge 0$ and a regular triangulation $\TT_\coarse$ of $\Gamma$ into non-degenerate compact surface simplices, we consider standard boundary element spaces $\XX_\coarse = \PP^p(\TT_\coarse)$ consisting of $\TT_\coarse$-piecewise polynomials of degree $\le p$ (w.r.t.\ the boundary parametrization). Let $u_\coarse \in \XX_\coarse$ be the corresponding Galerkin solution, i.e.,
\begin{align}
 a(u_\coarse, v_\coarse) = \int_\Gamma f v_\coarse \d{x}
 \quad \text{for all } v_\coarse \in \XX_\coarse.
\end{align}
According to the seminal work~\cite{cms2001} and under additional regularity $f \in H^1(\Gamma)$, the local contributions of the residual error estimator read
\begin{align}\label{eq:weaksing:estimator}
 \eta_\coarse(T) := h_T^{1/2} \, \norm{\nabla_\Gamma(f - Vu_\coarse)}{L^2(T)}
 \quad\text{with}\quad
 h_T := |T|^{1/(d-1)},
\end{align}
where $\nabla_\Gamma(\cdot)$ is the surface gradient and $|\cdot|$ is the surface measure. While~\cite{cms2001} proves reliability~\eqref{eq:reliable}, stability~\eqref{eq:axiom:stability} and reduction~\eqref{eq:axiom:reduction} have first been proved in~\cite{fkmp2013, gantumur2013} with $S(t) = \Cstab \,t$ and $R(t) = \Cred \, t^2$ provided that \eqref{eq:union_of_sons} and~\eqref{eq:reduction:h} are satisfied.
The constants $\Crel, \Cstab > 0$ depend only on $\Gamma$, $d$, $p$, and uniform $\gamma$-shape regularity of the admissible meshes $\TT_\coarse \in \T$, i.e., 
\begin{subequations}\label{eq:Gamma:gamma-shape-regular} 
\begin{align}
 \gamma &:= \sup_{\TT_\coarse \in \T} \max_{\substack{T,T' \in \TT_\coarse\\ T\cap T'\neq\emptyset}} \frac{|T'|}{|T|} < \infty\quad\quad\,\,\, \text{if } d=2, 
%\end{align}
\intertext{resp.}
%\begin{align}
 \gamma &:= \sup_{\TT_\coarse \in \T} \max_{T \in \TT_\coarse} \frac{{\rm diam}(T)}{|T|^{1/(d-1)}} < \infty\quad\text{if } d=3.
\end{align}
\end{subequations}
while $\Cred > 0$ and $0 < \contract{red} < 1$ depend additionally on $\contract{ctr}$.
The {\sl a~priori} convergence follows as in the previous section. Overall, we thus get the subsequent plain convergence result, where we note that for D\"orfler marking~\eqref{eq:marking:doerfler} with sufficiently small marking parameter $0 < \theta \ll 1$, \cite{fkmp2013, gantumur2013} proves even rate-optimal convergence. We also refer to our recent work~\cite{gp2020+}, which proves well-posedness of the residual estimator~\eqref{eq:weaksing:estimator} together with reliability, stability, and reduction for a large class of second-order elliptic PDEs with constant coefficients and the related weakly-singular integral operator $V$ as well as general mesh-refinement strategies.

\begin{proposition}
As long as the mesh-refinement strategy guarantees 
regular simplicial triangulations satisfying~\eqref{eq:union_of_sons}, \eqref{eq:reduction:h}, and~\eqref{eq:Gamma:gamma-shape-regular},
%uniform $\gamma$-shape regularity~\eqref{eq:Gamma:gamma-shape-regular} as well as the weak assumptions of Section~\ref{section:mesh-refinement} 
and as long as the marking strategy satisfies~\eqref{eq:marking}, Algorithm~\ref{alg:abstract_algorithm} for the weakly-singular integral equation~\eqref{eq:weaksing:weak} driven by the indicators~\eqref{eq:weaksing:estimator} yields convergence
\begin{align}
 \Crel^{-1} \, \norm{u - u_\ell}{\H^{-1/2}(\Gamma)}
 \le \eta_\ell \to 0
 \quad \text{as } \ell \to \infty.
\end{align}
\end{proposition}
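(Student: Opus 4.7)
The plan is to apply Theorem~\ref{theoerem:estconv:new} directly, exactly as in the two previous examples. The three inputs that the abstract theorem requires are stability~\eqref{eq:axiom:stability}, reduction~\eqref{eq:axiom:reduction}, and {\sl a~priori} convergence~\eqref{assumption:limit_u}. The first two are not my job: they are stated in the paragraph preceding the proposition and are taken from~\cite{cms2001,fkmp2013,gantumur2013} with the quantified form $S(t) = \Cstab\,t$ and $R(t) = \Cred\,t^2$, valid under the shape-regularity~\eqref{eq:Gamma:gamma-shape-regular} and mesh-size contraction~\eqref{eq:reduction:h}. Hence my only real task is to establish~\eqref{assumption:limit_u}.

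To verify {\sl a~priori} convergence, I would copy the Lax--Milgram argument from the fractional-Laplacian section essentially verbatim. By~\eqref{eq:union_of_sons}, the refinement strategy enforces nestedness $\XX_\ell \subseteq \XX_{\ell+1}$ of the boundary element spaces $\XX_\ell = \PP^p(\TT_\ell)$. Therefore $\XX_\infty := \mathrm{closure}\big(\bigcup_{\ell \in \N_0} \XX_\ell\big)$ is a closed subspace of $\XX = \widetilde{H}^{-1/2}(\Gamma)$, and the continuous and elliptic bilinear form $a(\cdot,\cdot)$ from~\eqref{eq:weaksing:weak} yields, via Lax--Milgram, a unique limit solution $u_\infty \in \XX_\infty$ with $a(u_\infty,v_\infty) = \int_\Gamma f v_\infty \d{x}$ for all $v_\infty \in \XX_\infty$. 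The Galerkin orthogonality inherited from $\XX_\ell \subseteq \XX_\infty$ combined with the C\'ea lemma gives
\begin{align*}
 \norm{u_\infty - u_\ell}{\XX} \lesssim \min_{v_\ell \in \XX_\ell} \norm{u_\infty - v_\ell}{\XX} \to 0 \quad \text{as } \ell \to \infty,
\end{align*}
because the right-hand side tends to zero by the very definition of $\XX_\infty$ as a closure.

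With~\eqref{assumption:limit_u} in hand, together with stability~\eqref{eq:axiom:stability}, reduction~\eqref{eq:axiom:reduction}, the refinement assumption~\eqref{eq:union_of_sons}, and the marking hypothesis~\eqref{eq:marking}, Theorem~\ref{theoerem:estconv:new} applies and immediately delivers $\eta_\ell \to 0$ as $\ell \to \infty$. Finally, the reliability estimate from~\cite{cms2001} supplies the error bound $\Crel^{-1} \norm{u - u_\ell}{\widetilde{H}^{-1/2}(\Gamma)} \le \eta_\ell$, which closes the proof.

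There is no serious obstacle: the entire proof is a bookkeeping exercise that collects stability and reduction from the cited BEM literature, reproduces the nested-space Lax--Milgram/C\'ea argument of the fractional-Laplacian example (which applies essentially without change to the single-layer operator $V$), and invokes Theorem~\ref{theoerem:estconv:new}. The only point where one must be slightly careful is noting that the ellipticity of $V$ on $\widetilde{H}^{-1/2}(\Gamma)$ requires $\diam(\Omega)<1$ when $d=2$, which is assumed in the setup of the subsection.
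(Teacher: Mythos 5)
Your proof matches the paper's own argument: the paper likewise imports stability, reduction, and reliability from~\cite{cms2001,fkmp2013,gantumur2013}, obtains {\sl a~priori} convergence by the same nestedness/Lax--Milgram/C\'ea argument ``as in the previous section'' (i.e., the fractional Laplacian example), and then invokes Theorem~\ref{theoerem:estconv:new}. Your remark about the need for $\diam(\Omega)<1$ when $d=2$ to ensure ellipticity of $V$ is a correct and relevant detail that the paper builds into the setup.
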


%\clearpage
%!TEX root = afem_convergence.tex
%%%%%%%%%%%%%%%%%%%%%%%%%%%%%%%%%%%%%%%%%%%%%%%%%%%%%%%%%%%%%%%%%%%%%%%%%%%%%%%%%%%
\subsection{Hyper-singular integral equations}
%%%%%%%%%%%%%%%%%%%%%%%%%%%%%%%%%%%%%%%%%%%%%%%%%%%%%%%%%%%%%%%%%%%%%%%%%%%%%%%%%%%

Let $\Omega \subset \R^d$ with $d = 2,3$ be a Lipschitz domain with compact boundary $\partial\Omega$.
On a (relatively) open and connected subset $\Gamma \subseteq \partial\Omega$ and given $f \in H^{-1/2}(\Gamma)$ with $\int_\Gamma f \d{x}=1$ 
%(denoting the extended $L^2$-scalar product) 
in case of $\Gamma=\partial\Omega$ the hyper-singular integral equation
\begin{align}\label{eq:hypsing:strong}
 (Wu)(x) := \text{p.v.} \int_\Gamma \frac{\partial_x}{\partial\nu(x)}\frac{\partial_y}{\partial\nu(y)}G(x-y) u(y) \d{y} = f(x)
 \quad \text{for all } x \in \Gamma
\end{align}
seeks the unknown integral density $u \in \XX := \set{v\in H^{1/2}(\Omega)}{\int_\Gamma v\d{x}=0}$ if $\Gamma=\partial\Omega$ resp.\ $u\in\XX:=\H^{1/2}(\Gamma)$ if $\Gamma\subsetneqq\partial\Omega$. Here, $\nu(\cdot)$ denotes the exterior normal vector and $G(\cdot)$ is again the fundamental solution of the Laplacian, i.e., $G(z) = -\frac{1}{2\pi} \log|z|$ for $d = 2$ resp.\ $G(z) = \frac{1}{4\pi} |z|^{-1}$ for $d = 3$. 
We note that for $\Gamma = \partial\Omega$, \eqref{eq:hypsing:strong}  is equivalent to the Neumann problem
\begin{align*}
 -\Delta U = 0 \text{ in } \Omega
 \quad \text{subject to}\quad \frac{\partial U}{\partial\nu}  = f \text{ on } \partial\Omega,
\end{align*}
supplemented by the appropriate radiation condition if $\Omega$ is unbounded; see~\cite{mclean}.
The weak formulation reads
\begin{align}\label{eq:hypsing:weak}
 a(u, v) := \int\!\!\!\!\int_{\Gamma\times\Gamma}  G(x-y) \, {\rm curl}_\Gamma u(y) \, {\rm curl}_\Gamma v(x) \d{y} \d{x} 
 = \int_\Gamma fv \d{x}
 \quad \text{for all } v \in \XX,
\end{align}
where ${\rm curl}_\Gamma(\cdot)$ denotes the surface curl (resp.\ the arclength derivative for $d = 2$); see~\cite{mclean}. The 
Lax--Milgram lemma yields existence und uniqueness of the solution $u \in \XX$. 

For a fixed polynomial degree $p \ge 1$ and a regular triangulation $\TT_\coarse$ of $\Gamma$ into non-degenerate compact surface simplices, we consider standard boundary element spaces $\XX_\coarse = \mathcal{S}^p(\TT_\coarse)$ consisting of globally continuous $\TT_\coarse$-piecewise polynomials of degree $\le p$ (w.r.t.\ the boundary parametrization). Let $u_\coarse \in \XX_\coarse$ be  the corresponding Galerkin solution, i.e.,
\begin{align}
 a(u_\coarse, v_\coarse) = \int_\Gamma f v_\coarse \d{x}
 \quad \text{for all } v_\coarse \in \XX_\coarse.
\end{align}
According to the seminal work~\cite{cmps04} and under additional regularity $f \in L^2(\Gamma)$, the local contributions of the residual error estimator read
\begin{align}\label{eq:hypsing:estimator}
 \eta_\coarse(T) := h_T^{1/2} \, \norm{f - Wu_\coarse}{L^2(T)}
 \quad\text{with}\quad
 h_T := |T|^{1/(d-1)},
\end{align}
where $|\cdot|$ is the surface measure. While~\cite{cmps04} proves reliability~\eqref{eq:reliable}, stability~\eqref{eq:axiom:stability} and reduction~\eqref{eq:axiom:reduction} have first been proved in~\cite{gantumur2013,ffkmp15} with $S(t) = \Cstab \,t$ and $R(t) = \Cred \, t^2$ provided that \eqref{eq:union_of_sons} and~\eqref{eq:reduction:h} are satisfied.
The constants $\Crel, \Cstab > 0$ depend only on $\Gamma$, $d$, $p$, and uniform $\gamma$-shape regularity~\eqref{eq:Gamma:gamma-shape-regular}  of the admissible meshes $\TT_\coarse \in \T$, 
while $\Cred > 0$ and $0 < \contract{red} < 1$ depend additionally on $\contract{ctr}$.
The {\sl a~priori} convergence follows as in Section~\ref{section:fractional}. Overall, we thus get the subsequent plain convergence result, where we note that for D\"orfler marking~\eqref{eq:marking:doerfler} with sufficiently small marking parameter $0 < \theta \ll 1$, \cite{gantumur2013,ffkmp15} proves even rate-optimal convergence.

\begin{proposition}
As long as the mesh-refinement strategy guarantees regular simplicial triangulations satisfying~\eqref{eq:union_of_sons}, \eqref{eq:reduction:h}, and~\eqref{eq:Gamma:gamma-shape-regular}, and as long as the marking strategy satisfies~\eqref{eq:marking}, Algorithm~\ref{alg:abstract_algorithm} for the hyper-singular integral equation~\eqref{eq:hypsing:weak} driven by the indicators~\eqref{eq:hypsing:estimator} yields convergence
\begin{align}
 \Crel^{-1} \, \norm{u - u_\ell}{\H^{1/2}(\Gamma)}
 \le \eta_\ell \to 0
 \quad \text{as } \ell \to \infty.
\end{align}
\end{proposition}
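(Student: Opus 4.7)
The plan is to mirror the strategy of the three preceding propositions: verify the structural assumptions of Theorem~\ref{theoerem:estconv:new} in the current hyper-singular setting, apply that theorem to obtain estimator convergence, and then upgrade to error convergence via the reliability estimate~\eqref{eq:reliable}.

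First I would establish {\sl a~priori} convergence~\eqref{assumption:limit_u}. The refinement axiom~\eqref{eq:union_of_sons} ensures nestedness of the discrete spaces, $\mathcal{S}^p(\TT_\ell)\subseteq\mathcal{S}^p(\TT_{\ell+1})$, so that $\XX_\infty := {\rm closure}\bigl(\bigcup_{\ell\in\N_0}\XX_\ell\bigr)$ is a closed subspace of $\XX$. In the closed case $\Gamma=\partial\Omega$, continuity of $v\mapsto\int_\Gamma v\d{x}$ on $H^{1/2}(\partial\Omega)$ ensures that the zero-mean constraint defining $\XX$ is preserved under taking closure, so the Lax--Milgram lemma applied on $\XX_\infty$ furnishes a unique $u_\infty\in\XX_\infty$ with $a(u_\infty,v_\infty)=\int_\Gamma fv_\infty\d{x}$ for all $v_\infty\in\XX_\infty$. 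Galerkin orthogonality of $u_\ell$ inside $\XX_\ell\subseteq\XX_\infty$ combined with C\'ea's lemma then yields
\begin{align*}
\norm{u_\infty - u_\ell}{\XX} \lesssim \min_{v_\ell\in\XX_\ell}\norm{u_\infty - v_\ell}{\XX} \longrightarrow 0
\quad\text{as } \ell\to\infty,
\end{align*}
which is exactly~\eqref{assumption:limit_u}.

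Next I would invoke the structural axioms already supplied by the literature. Under~\eqref{eq:union_of_sons} and the mesh-size contraction~\eqref{eq:reduction:h}, \cite{gantumur2013,ffkmp15} show that the residual estimator~\eqref{eq:hypsing:estimator} satisfies stability~\eqref{eq:axiom:stability} and reduction~\eqref{eq:axiom:reduction} with $S(t)=\Cstab\,t$, $R(t)=\Cred\,t^2$, and some $0<\contract{red}<1$. In particular $R$ and $S$ are continuous at $0$ with $R(0)=S(0)=0$, as demanded by Theorem~\ref{theoerem:estconv:new}; the constants depend only on $\Gamma$, $d$, $p$, on uniform $\gamma$-shape regularity~\eqref{eq:Gamma:gamma-shape-regular}, and on $\contract{ctr}$.

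With these pieces in hand, Theorem~\ref{theoerem:estconv:new} applied under the marking hypothesis~\eqref{eq:marking} immediately yields estimator convergence $\eta_\ell\to 0$. The reliability bound $\norm{u-u_\ell}{\H^{1/2}(\Gamma)}\le\Crel\,\eta_\ell$ of~\cite{cmps04} then converts this into the asserted chained estimate
\begin{align*}
\Crel^{-1}\,\norm{u-u_\ell}{\H^{1/2}(\Gamma)}\le\eta_\ell\to 0.
\end{align*}
There is essentially no obstacle: all required technical ingredients are already recorded either in the paper or in the cited references, and the role of the proof is purely to assemble them. The only point meriting brief care is the handling of the zero-mean side condition in the closed-surface case when passing to $\XX_\infty$, which is immediate by continuity as noted above.
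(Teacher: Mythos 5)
Your proposal follows essentially the same route as the paper: cite \cite{cmps04} for reliability and \cite{gantumur2013,ffkmp15} for stability and reduction, invoke nestedness plus Lax--Milgram and C\'ea to obtain {\sl a~priori} convergence as in Section~\ref{section:fractional}, and then apply Theorem~\ref{theoerem:estconv:new} together with reliability. Your extra remark that the zero-mean constraint defining $\XX$ in the closed-surface case is preserved under taking the closure of $\bigcup_\ell\XX_\ell$ is a correct and welcome point of care, but it does not change the argument.
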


%!TEX root = afem_convergence.tex
%%%%%%%%%%%%%%%%%%%%%%%%%%%%%%%%%%%%%%%%%%%%%%%%%%%%%%%%%%%%%%%%%%%%%%%%%%%%%%%%%%%
\subsection{Nonlinear interface problems}
%%%%%%%%%%%%%%%%%%%%%%%%%%%%%%%%%%%%%%%%%%%%%%%%%%%%%%%%%%%%%%%%%%%%%%%%%%%%%%%%%%%

Let $\Omega \subset \R^d$ with $d = 2,3$ be a bounded Lipschitz domain with compact boundary $\Gamma:=\partial\Omega$ and exterior domain $\Omega^{\rm ext}:=\R^d\setminus\overline\Omega$ such that $\diam(\Omega)<1$ if $d=2$. 
Further, let $A:\R^d\to\R^d$ be a Lipschitz continuous and strongly monotone coefficient function in the sense that there exist constants $C_{\rm lip},C_{\rm mon}>0$ such that 
\begin{align}
 |Ax-Ay|&\le C_{\rm lip} |x-y| \quad\text{for all }x,y\in\R^d,
 \\
 C_{\rm mon}\norm{\nabla u - \nabla v}{L^2(\Omega)}^2 &\le \int_\Omega (A\nabla u -A \nabla v)\cdot (\nabla u - \nabla v) \d{x}
 \quad\text{for all }u,v\in H^1(\Omega).
\end{align}
For given data $f\in L^2(\Omega)$, $u_D\in H^{1/2}(\Gamma)$, and $\phi_N\in H^{-1/2}(\Gamma)$ with additional compatibility condition 
\begin{align}
\int_\Omega f \d{x} + \int_\Gamma \phi_N \d{x} = 0
\quad \text{in case of $d=2$},
\end{align} 
we consider the nonlinear interface problem 
\begin{subequations}\label{eq:fembem:strong}
\begin{align}
 -\div(A\nabla u) &= f \quad\text{in }\Omega,\\
 -\Delta u^{\rm ext} &= 0 \quad\text{in }\Omega^{\rm ext},\\
 u-u^{\rm ext} &= u_D\quad \text{on }\Gamma,\\
 (A\nabla u - \nabla u^{\rm ext})\cdot \nu &= \phi_N \quad \text{on }\Gamma,\\
 u^{\rm ext} &= \mathcal{O}(|x|^{-1}) \quad \text{as }|x|\to \infty.
\end{align}
\end{subequations}
We seek for a weak solution $(u,u^{\rm ext})\in H^1(\Omega)\times H_{\rm loc}^1(\Omega^{\rm ext})$, where $H_{\rm loc}^1(\Omega^{\rm ext}) = \set{v}{v\in H^1(\omega) \text{ for all open and bounded }\omega\subseteq\Omega^{\rm ext}}$. 
%It is well-known that~\eqref{eq:fembem:strong} admits a unique solution  under the assumptions made; see, e.g., \cite{affkmp13}. 
There are different ways to equivalently reformulate \eqref{eq:fembem:strong} as FEM-BEM coupling. 
To ease presentation, we restrict ourselves to the Bielak--MacCamy coupling~\cite{bm83}, but we stress that Proposition~\ref{prop:fembem:convergence} holds accordingly for  the Johnson--N\'ed\'elec coupling~\cite{jn80} as well as Costabel's symmetric coupling~\cite{costabel88}; see  \cite{affkmp13} for details. 
Recalling the single-layer operator $V$ from~\eqref{eq:weaksing:strong} and defining the adjoint double layer operator 
\begin{align}
 (K'\phi)(x):=\int_\Gamma \frac{\partial_x}{\partial\nu(x)} G(x,y) \phi(y)  \d{y} \quad\text{for all }\phi\in H^{-1/2}(\Gamma) \text{ and all }  x\in\Gamma, 
\end{align}
the variational formulation resulting from the Bielak--MacCamy coupling seeks some ${\bf u} = (u,\phi)\in\XX:= H^1(\Omega)\times H^{-1/2}(\Gamma)$ such that 
\begin{align}\label{eq:fembem:weak}
 \begin{split}
 a ({\bf u},{\bf v}) &:= 
 \int_\Omega (A\nabla u) \cdot \nabla v\d{x} + \int_\Gamma \big((1/2 - K') \phi\big)  v \d{x}  + \int_\Gamma (u - V\phi)\psi \d{x}
 \\
% a_{\rm jn} ({\bf u},{\bf v}) &:= 
% \int_\Omega (A\nabla u) \cdot \nabla v\d{x} - \int_\Gamma \phi v\d{x} + \int_\Gamma \big((1/2- K) u + V\phi\big) \psi\d{x},
% \\
% \begin{split}
% a_{\rm sym}({\bf u}, {\bf v}) &:= 
% \int_\Omega (A\nabla u) \cdot \nabla v\d{x} + \int_\Gamma \big((K-1/2)\phi\big) v\d{x} + \int_\Gamma (Wu)v \d{x} 
% \\
% &\qquad+ \int_\Gamma \big((1/2-K)u + V\phi\big)\psi \d{x}
% \end{split}
%\end{align}
%and right-hand sides
%\begin{align}
 &=\int_\Omega fv\d{x} + \int_\Gamma\phi_N v \d{x} - \int_\Gamma u_D\psi\d{x} =: F({\bf v}) 
 \quad \text{for all } {\bf v} = (v,\psi)\in\XX.
% \\
% F_{\rm jn}({\bf v}) &:= \int_\Omega f v\d{x} + \int_\Gamma \phi_N v\d{x} + \int_\Gamma \big((1/2-K)u_D\big) \psi\d{x},
% \\
% F_{\rm sym} ({\bf v})&:= \int_\Omega fv\d{x} + \int_\Gamma (\phi + Wu_D) v\d{x} + \int_\Gamma \big((1/2-K)u_D\big) \psi\d{x}.
 \end{split}
\end{align}
According to \cite{affkmp13}, \eqref{eq:fembem:weak} is uniquely solvable provided that $C_{\rm mon}>1/4$.

For a regular triangulation $\TT_\coarse$ of $\Omega$ into non-degenerate compact simplices and the induced regular triangulation $\EE_\coarse$ of $\Gamma$ into non-degenerate compact surface simplices, we consider globally continuous $\TT_\coarse$-piecewise affine functions $\mathcal{S}^1(\TT_\coarse)$  to discretize $H^1(\Omega)$, and 
 $\EE_\coarse$-piecewise constant functions $\mathcal{P}^0(\EE_\coarse)$ to discretize $H^{-1/2}(\Gamma)$,
 i.e., $\XX_\coarse=\SS^1(\TT_\coarse)\times\PP^0(\EE_\coarse)$.
  Let ${\bf u}_\coarse \in \XX_\coarse$ be  the corresponding Galerkin solution, i.e.,
\begin{align}
 a({\bf u}_\coarse, {\bf v}_\coarse) =F({\bf v}_\coarse)
 \quad \text{for all } {\bf v}_\coarse \in \XX_\coarse.
\end{align}
According to~\cite{affkmp13}, the local contributions of the residual error estimator read
\begin{align}\label{eq:fembem:estimator}
 \begin{split}
 &\eta_\coarse(T)^2 := 
 h_T^2 \norm{f}{L^2(T)}^2 + h_T \Big(\norm{[(A\nabla u_\coarse)\cdot \nu]}{L^2(\partial T\cap \Omega)}^2
 \\ 
 &+ \norm{\phi_N+(K'-1/2)\phi_\coarse-(A\nabla u_\coarse)\cdot\nu}{L^2(\partial T\cap\Gamma)}^2 + \norm{\nabla_\Gamma (u_\coarse-u_D-V\phi_\coarse)}{L^2(\partial T\cap\Gamma)}^2\Big)
 \end{split}
% \\
% \begin{split}
% &\eta_\coarse^{\rm jn}(T)^2 :=
% h_T^2 \norm{f}{L^2(T)}^2 + h_T \Big(\norm{[(A\nabla u_\coarse)\cdot \nu]}{L^2(\partial T\cap \Omega)}^2
% \\ 
% &+ \norm{\phi_N+\phi_\coarse-(A\nabla u_\coarse)\cdot\nu}{L^2(\partial T\cap \Gamma)}^2 + \norm{\nabla_\Gamma \big((1/2-K)(u_D-u_\coarse)-V\phi_\coarse\big)}{L^2(\partial T\cap\Gamma)}^2\Big)
% \end{split}
% \\
% \begin{split}
%  &\eta_\coarse^{\rm sym}(T)^2 :=
% h_T^2 \norm{f}{L^2(T)}^2 + h_T \Big(\norm{[(A\nabla u_\coarse)\cdot \nu]}{L^2(\partial T\cap \Omega)}^2
% \\
% &+ \norm{\phi_N-(A\nabla u_\coarse)\cdot\nu+W(u_D-u_\coarse)-(K'-1/2)\phi_\coarse}{L^2(\partial T\cap\Gamma)}^2
% + \norm{\nabla_\Gamma\big((1/2-K)(u_D-u_\coarse)-V\phi_\coarse\big)}{L^2(\partial T\cap\Gamma)}^2\Big).
% \end{split}
\end{align}
with the surface gradient $\nabla_\Gamma(\cdot)$ and the mesh-size $h_T:=|T|^{1/d}$ for all $T\in\TT_\coarse$.
Indeed, \cite{affkmp13} proves reliability~\eqref{eq:reliable}, stability~\eqref{eq:axiom:stability}, and reduction~\eqref{eq:axiom:reduction} (where the terms $u_\coarse,u_\fine$ are replaced by ${\bf u}_\coarse,{\bf u}_\fine$)  with $S(t) = \Cstab \,t$ and $R(t) = \Cred \, t^2$ provided that \eqref{eq:union_of_sons} and~\eqref{eq:reduction:h} are satisfied. 
The constants $\Crel, \Cstab > 0$ depend only on $\Gamma$, $d$, $C_{\rm lip}$, and uniform $\gamma$-shape regularity~\eqref{eq:gamma-shape-regular}  of the admissible meshes $\TT_\coarse \in \T$, 
while $\Cred > 0$ and $0 < \contract{red} < 1$ depend additionally on $\contract{ctr}$.
The {\sl a~priori} convergence follows as in Section~\ref{section:fractional}, where the required C\'ea lemma is given in~\cite[Corollary~12]{affkmp13}. Overall, we thus get the subsequent plain convergence result.

\begin{proposition}\label{prop:fembem:convergence}
As long as the mesh-refinement strategy guarantees regular simplicial triangulations satisfying~\eqref{eq:union_of_sons}, \eqref{eq:reduction:h}, and~\eqref{eq:gamma-shape-regular}, and as long as the marking strategy satisfies~\eqref{eq:marking}, Algorithm~\ref{alg:abstract_algorithm} for the Bielak--MacCamy coupling~\eqref{eq:fembem:weak} driven by the indicators~\eqref{eq:fembem:estimator} yields convergence
\begin{align}
 \Crel^{-1} \, \norm{{\bf u} - {\bf u}_\ell}{H^1(\Omega)\times H^{-1/2}(\Gamma)}
 \le \eta_\ell \to 0
 \quad \text{as } \ell \to \infty.
\end{align}
\end{proposition}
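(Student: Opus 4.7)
The plan is to verify the hypotheses of Theorem~\ref{theoerem:estconv:new} for the Bielak--MacCamy coupling and then combine the resulting estimator convergence with reliability, exactly as was done for the preceding examples in this section.

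First, I would establish {\sl a~priori} convergence~\eqref{assumption:limit_u}. Since the refinement strategy satisfies~\eqref{eq:union_of_sons}, the product spaces $\XX_\ell = \SS^1(\TT_\ell) \times \PP^0(\EE_\ell)$ are nested, $\XX_\ell \subseteq \XX_{\ell+1}$, so $\XX_\infty := \overline{\bigcup_{\ell \in \N_0} \XX_\ell}$ is a well-defined closed subspace of $\XX$. The hypothesis $C_{\rm mon}>1/4$ ensures that the nonlinear operator associated to $a(\cdot,\cdot)$ is Lipschitz continuous and strongly monotone on $\XX$; the Browder--Minty theorem (as invoked in~\cite{affkmp13}) then yields a unique Galerkin limit ${\bf u}_\infty \in \XX_\infty$ with $a({\bf u}_\infty,{\bf v}_\infty) = F({\bf v}_\infty)$ for all ${\bf v}_\infty \in \XX_\infty$. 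The C\'ea-type lemma of~\cite[Corollary~12]{affkmp13} together with density of $\bigcup_\ell \XX_\ell$ in $\XX_\infty$ then gives $\norm{{\bf u}_\infty - {\bf u}_\ell}{\XX} \to 0$, which is~\eqref{assumption:limit_u}.

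Second, I would invoke the structural properties of the estimator. Stability~\eqref{eq:axiom:stability} and reduction~\eqref{eq:axiom:reduction} for the indicator~\eqref{eq:fembem:estimator}, with $S(t) = \Cstab\, t$ and $R(t) = \Cred\, t^2$ (continuous at $0$, vanishing there), are established in~\cite{affkmp13} precisely under the standing assumptions~\eqref{eq:union_of_sons}, \eqref{eq:reduction:h}, and uniform $\gamma$-shape regularity~\eqref{eq:gamma-shape-regular}, which are all hypotheses of the proposition. The marking condition~\eqref{eq:marking} is assumed directly. Hence all hypotheses of Theorem~\ref{theoerem:estconv:new} are in place, and it delivers $\eta_\ell \to 0$ as $\ell \to \infty$. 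Combined with the reliability bound $\norm{{\bf u}-{\bf u}_\ell}{\XX} \le \Crel\,\eta_\ell$ from~\cite{affkmp13}, this yields the claim.

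The main obstacle is the first step: setting up {\sl a~priori} convergence in the nonlinear setting. Unlike in Sections~\ref{section:fractional} and the subsequent linear examples, the Lax--Milgram lemma is not available, so the existence of ${\bf u}_\infty$ and the limit passage $\norm{{\bf u}_\infty-{\bf u}_\ell}{\XX} \to 0$ rely on strong monotonicity, the Browder--Minty theorem, and the nonlinear C\'ea estimate of~\cite[Corollary~12]{affkmp13}. Once this is in place, the remainder of the argument is a direct application of Theorem~\ref{theoerem:estconv:new} and of reliability.
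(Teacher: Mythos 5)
Your proposal is correct and follows essentially the same route the paper takes: verify the hypotheses of Theorem~\ref{theoerem:estconv:new} (nestedness and {\sl a~priori} convergence via the C\'ea estimate of~\cite[Corollary~12]{affkmp13}; stability and reduction with $S(t)=\Cstab\,t$, $R(t)=\Cred\,t^2$ from~\cite{affkmp13}; the marking condition~\eqref{eq:marking}), obtain $\eta_\ell \to 0$, and conclude with reliability. The one place where you are slightly more explicit than the paper is the first step: the paper simply says that {\sl a~priori} convergence ``follows as in Section~\ref{section:fractional}'' with the nonlinear C\'ea lemma, whereas Section~\ref{section:fractional} invokes Lax--Milgram; you correctly point out that in the nonlinear, strongly monotone setting one needs the Browder--Minty (Zarantonello) well-posedness of the Galerkin problem on $\XX_\infty$ in place of Lax--Milgram. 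This is indeed what~\cite{affkmp13} provides under $C_{\rm mon}>1/4$, so your filling-in of that detail is accurate and does not change the overall structure of the argument.
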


%%%%%%%%%%%%%%%%%%%%%%%%%%%%%%%%%%%%%%%%%%%%%%%%%%%%%%%%%%%%%%%%%%%%%%%%%%%%%%%%%%%
%%%%%%%%%%%%%%%%%%%%%%%%%%%%%%%%%%%%%%%%%%%%%%%%%%%%%%%%%%%%%%%%%%%%%%%%%%%%%%%%%%%
%%%%%%%%%%%%%%%%%%%%%%%%%%%%%%%%%%%%%%%%%%%%%%%%%%%%%%%%%%%%%%%%%%%%%%%%%%%%%%%%%%%
%%%%%%%%%%%%%%%%%%%%%%%%%%%%%%%%%%%%%%%%%%%%%%%%%%%%%%%%%%%%%%%%%%%%%%%%%%%%%%%%%%%

\bibliographystyle{alpha}
\bibliography{literature}

\newcommand{\etalchar}[1]{$^{#1}$}
\begin{thebibliography}{FKMP13}

\bibitem[AFF{\etalchar{+}}13]{affkmp13}
Markus Aurada, Michael Feischl, Thomas F{\"u}hrer, Michael Karkulik,
  Jens~Markus Melenk, and Dirk Praetorius.
\newblock Classical {FEM-BEM} coupling methods: nonlinearities, well-posedness,
  and adaptivity.
\newblock {\em Comput. Mech.}, 51(4):399--419, 2013.

\bibitem[AFLP12]{afp2012}
Markus Aurada, Samuel Ferraz-Leite, and Dirk Praetorius.
\newblock Estimator reduction and convergence of adaptive {BEM}.
\newblock {\em Appl. Numer. Math.}, 62(6):787--801, 2012.

\bibitem[AG17]{MR3725761}
Mark Ainsworth and Christian Glusa.
\newblock Aspects of an adaptive finite element method for the fractional
  {L}aplacian: a priori and a posteriori error estimates, efficient
  implementation and multigrid solver.
\newblock {\em Comput. Methods Appl. Mech. Engrg.}, 327:4--35, 2017.

\bibitem[AO00]{ao00}
Mark Ainsworth and J.~Tinsley Oden.
\newblock {\em A posteriori error estimation in finite element analysis}.
\newblock Pure and Applied Mathematics. Wiley-Interscience, New York, 2000.

\bibitem[BBN{\etalchar{+}}18]{MR3893441}
Andrea Bonito, Juan~Pablo Borthagaray, Ricardo~H. Nochetto, Enrique
  Ot\'{a}rola, and Abner~J. Salgado.
\newblock Numerical methods for fractional diffusion.
\newblock {\em Comput. Vis. Sci.}, 19(5-6):19--46, 2018.

\bibitem[BCH07]{bch2007}
Dietrich Braess, Carsten Carstensen, and Ronald H.~W. Hoppe.
\newblock Convergence analysis of a conforming adaptive finite element method
  for an obstacle problem.
\newblock {\em Numer. Math.}, 107(3):455--471, 2007.

\bibitem[BDD04]{bdd2004}
Peter Binev, Wolfgang Dahmen, and Ron DeVore.
\newblock Adaptive finite element methods with convergence rates.
\newblock {\em Numer. Math.}, 97(2):219--268, 2004.

\bibitem[BM83]{bm83}
Jacobo Bielak and Richard~C. MacCamy.
\newblock An exterior interface problem in two-dimensional elastodynamics.
\newblock {\em Quart. Appl. Math.}, 41(1):143--159, 1983.

\bibitem[BV84]{bv1984}
Ivo Babu\v{s}ka and Michael Vogelius.
\newblock Feedback and adaptive finite element solution of one-dimensional
  boundary value problems.
\newblock {\em Numer. Math.}, 44(1):75--102, 1984.

\bibitem[CFPP14]{axioms}
Carsten Carstensen, Michael Feischl, Marcus Page, and Dirk Praetorius.
\newblock Axioms of adaptivity.
\newblock {\em Comput. Math. Appl.}, 67(6):1195--1253, 2014.

\bibitem[CH15]{ch2015}
Carsten Carstensen and Jun Hu.
\newblock An optimal adaptive finite element method for an obstacle problem.
\newblock {\em Comput. Methods Appl. Math.}, 15(3):259--277, 2015.

\bibitem[CKNS08]{ckns2008}
J.~Manuel Cascon, Christian Kreuzer, Ricardo~H. Nochetto, and Kunibert~G.
  Siebert.
\newblock Quasi-optimal convergence rate for an adaptive finite element method.
\newblock {\em SIAM J. Numer. Anal.}, 46(5):2524--2550, 2008.

\bibitem[CMPS04]{cmps04}
Carsten Carstensen, Matthias Maischak, Dirk Praetorius, and Ernst~P. Stephan.
\newblock Residual-based a posteriori error estimate for hypersingular equation
  on surfaces.
\newblock {\em Numer. Math.}, 97(3):397--425, 2004.

\bibitem[CMS01]{cms2001}
Carsten Carstensen, Matthias Maischak, and Ernst~P. Stephan.
\newblock A posteriori error estimate and {$h$}-adaptive algorithm on surfaces
  for {S}ymm's integral equation.
\newblock {\em Numer. Math.}, 90(2):197--213, 2001.

\bibitem[CN12]{cn2012}
J.~Manuel Casc\'{o}n and Ricardo~H. Nochetto.
\newblock Quasioptimal cardinality of {AFEM} driven by nonresidual estimators.
\newblock {\em IMA J. Numer. Anal.}, 32(1):1--29, 2012.

\bibitem[Cos88]{costabel88}
Martin Costabel.
\newblock A symmetric method for the coupling of finite elements and boundary
  elements.
\newblock In {\em The mathematics of finite elements and applications IV},
  pages 281--288. Academic Press, London, 1988.

\bibitem[CS07]{MR2354493}
Luis Caffarelli and Luis Silvestre.
\newblock An extension problem related to the fractional {L}aplacian.
\newblock {\em Comm. Partial Differential Equations}, 32(7-9):1245--1260, 2007.

\bibitem[DK08]{dk2008}
Lars Diening and Christian Kreuzer.
\newblock Linear convergence of an adaptive finite element method for the
  {$p$}-{L}aplacian equation.
\newblock {\em SIAM J. Numer. Anal.}, 46(2):614--638, 2008.

\bibitem[DKS16]{dks2016}
Lars Diening, Christian Kreuzer, and Rob Stevenson.
\newblock Instance optimality of the adaptive maximum strategy.
\newblock {\em Found. Comput. Math.}, 16(1):33--68, 2016.

\bibitem[D{\"o}r96]{doerfler1996}
Willy D{\"o}rfler.
\newblock A convergent adaptive algorithm for {P}oisson's equation.
\newblock {\em SIAM J. Numer. Anal.}, 33(3):1106--1124, 1996.

\bibitem[Fal74]{falk1974}
Richard~S. Falk.
\newblock Error estimates for the approximation of a class of variational
  inequalities.
\newblock {\em Math. Comput.}, 28:963--971, 1974.

\bibitem[FFK{\etalchar{+}}15]{ffkmp15}
Michael Feischl, Thomas F{\"u}hrer, Michael Karkulik, Jens~Markus Melenk, and
  Dirk Praetorius.
\newblock Quasi-optimal convergence rates for adaptive boundary element methods
  with data approximation, {P}art {II}: {H}yper-singular integral equation.
\newblock {\em Electron. Trans. Numer. Anal.}, 44:153--176, 2015.

\bibitem[FFP14]{ffp2014}
Michael Feischl, Thomas F\"{u}hrer, and Dirk Praetorius.
\newblock Adaptive {FEM} with optimal convergence rates for a certain class of
  nonsymmetric and possibly nonlinear problems.
\newblock {\em SIAM J. Numer. Anal.}, 52(2):601--625, 2014.

\bibitem[FKMP13]{fkmp2013}
Michael Feischl, Michael Karkulik, {Jens Markus} Melenk, and Dirk Praetorius.
\newblock Quasi-optimal convergence rate for an adaptive boundary element
  method.
\newblock {\em SIAM J. Numer. Anal.}, 51(2):1327--1348, 2013.

\bibitem[FMP19]{fmp2019+}
Markus Faustmann, {Jens Markus} Melenk, and Dirk Praetorius.
\newblock Quasi-optimal convergence rate for an adaptive method for the
  integral fractional {L}aplacian.
\newblock {\em Preprint}, {\tt arXiv:1903.10409 }, 2019.

\bibitem[FP20]{fp2020+}
Thomas F\"uhrer and Dirk Praetorius.
\newblock A short note on plain convergence of adaptive least-squares finite
  element methods.
\newblock {\em Comput. Math. Appl.}, 80(6):1619--1632, 2020.

\bibitem[Gan13]{gantumur2013}
Tsogtgerel Gantumur.
\newblock Adaptive boundary element methods with convergence rates.
\newblock {\em Numer. Math.}, 124(3):471--516, 2013.

\bibitem[GP20]{gp2020+}
Gregor Gantner and Dirk Praetorius.
\newblock Adaptive {BEM} for elliptic {PDE} systems, {Part I}: {A}bstract
  framework for weakly-singular integral equations.
\newblock {\em Appl. Anal.}, in print, 2020.

\bibitem[GS20]{gs2020+}
Gregor Gantner and Rob Stevenson.
\newblock Further results on a space-time {FOSLS} formulation of parabolic
  {PDEs}.
\newblock {\em Preprint}, {\tt arXiv:2005.11000}, 2020.

\bibitem[IP20]{ip2020+}
Michael Innerberger and Dirk Praetorius.
\newblock Instance-optimal goal-oriented adaptivity.
\newblock {\em Comp. Meth. Appl. Math.}, in print, 2020.

\bibitem[JN80]{jn80}
Claes Johnson and J.~Claude N\'ed\'elec.
\newblock On the coupling of boundary integral and finite element methods.
\newblock {\em Math. Comp.}, pages 1063--1079, 1980.

\bibitem[KS00]{MR1786735}
David Kinderlehrer and Guido Stampacchia.
\newblock {\em An introduction to variational inequalities and their
  applications}, volume~31 of {\em Classics in Applied Mathematics}.
\newblock Society for Industrial and Applied Mathematics (SIAM), Philadelphia,
  PA, 2000.
\newblock Reprint of the 1980 original.

\bibitem[KS16]{ks2016}
Christian Kreuzer and Mira Schedensack.
\newblock Instance optimal {C}rouzeix-{R}aviart adaptive finite element methods
  for the {P}oisson and {S}tokes problems.
\newblock {\em IMA J. Numer. Anal.}, 36(2):593--617, 2016.

\bibitem[Kwa17]{MR3613319}
Mateusz Kwa\'{s}nicki.
\newblock Ten equivalent definitions of the fractional {L}aplace operator.
\newblock {\em Fract. Calc. Appl. Anal.}, 20(1):7--51, 2017.

\bibitem[McL00]{mclean}
William McLean.
\newblock {\em Strongly elliptic systems and boundary integral equations}.
\newblock Cambridge University Press, Cambridge, 2000.

\bibitem[MNS00]{mns2000}
Pedro Morin, Ricardo~H. Nochetto, and Kunibert~G. Siebert.
\newblock Data oscillation and convergence of adaptive {FEM}.
\newblock {\em SIAM J. Numer. Anal.}, 38(2):466--488, 2000.

\bibitem[MSV08]{msv2008}
Pedro Morin, Kunibert~G. Siebert, and Andreas Veeser.
\newblock A basic convergence result for conforming adaptive finite elements.
\newblock {\em Math. Models Methods Appl. Sci.}, 18(5):707--737, 2008.

\bibitem[PP13]{pp2011}
Marcus Page and Dirk Praetorius.
\newblock Convergence of adaptive {FEM} for some elliptic obstacle problem.
\newblock {\em Appl. Anal.}, 92(3):595--615, 2013.

\bibitem[PP20]{pp2020}
Carl-Martin Pfeiler and Dirk Praetorius.
\newblock D\"orfler marking with minimal cardinality is a linear complexity
  problem.
\newblock {\em Math. Comp.}, 89:2735--2752, 2020.

\bibitem[Sie11]{siebert2011}
Kunibert~G. Siebert.
\newblock A convergence proof for adaptive finite elements without lower bound.
\newblock {\em IMA J. Numer. Anal.}, 31(3):947--970, 2011.

\bibitem[Ste07]{stevenson2007}
Rob Stevenson.
\newblock Optimality of a standard adaptive finite element method.
\newblock {\em Found. Comput. Math.}, 7(2):245--269, 2007.

\bibitem[Ver13]{verfuerth}
R\"udiger Verf\"urth.
\newblock {\em A posteriori error estimation techniques for finite element
  methods}.
\newblock Numerical Mathematics and Scientific Computation. Oxford University
  Press, Oxford, 2013.

\end{thebibliography}

\end{document}